\documentclass[a4paper,10pt,oneside]{amsart}
\usepackage{algorithmic}
\usepackage{algorithm}
\usepackage{mathtools}
\usepackage{pgfplots}
\usetikzlibrary{ decorations.markings}

\newtheorem{theorem}{Theorem}



\newtheorem*{theorem*}{Theorem}
\newtheorem{definition}{Definition}

\newtheorem*{lemma*}{Lemma}

\newtheorem*{assumption*}{Assmuption}


%

%

\title[Turnpikes, entry and leaving arcs]{Entry and leaving arcs of turnpikes: their exact computation in the calculus of variations}
\subjclass{49J15, 49M05, 49M99}
\usepackage{enumerate}

\definecolor{nuevo}{RGB}{0,0,120}

\newboolean{pedro}
\setboolean{pedro}{true}
\newcommand{\pedro}[1]{\ifthenelse{\boolean{pedro}}{\color{black!40!red}\marginpar{$\star$ #1}\setboolean{pedro}{false}}{\color{black}\setboolean{pedro}{true}}}

\begin{document}
\author{L. Bayón, P. Fortuny Ayuso, J.M. Grau, M.M. Ruiz}
\address{All authors' address: Dpto. de Matemáticas, Universidad de Oviedo. Spain.}
\email{bayon@uniovi.es}
\email{fortunypedro@uniovi.es}
\email{grau.ribas@uniovi.es}
\email{mruiz@uniovi.es}

\maketitle
\begin{abstract}
  We settle the question of how to compute the entry and leaving arcs for turnpikes in autonomous variational problems, in the one-dimensional case using the phase space of the vector field associated to the Euler equation, and the initial/final and/or the transversality condition. The results hinge on the realization that extremals are the contours of a well-known function and that that the transversality condition is (generically) a curve. An approximation algorithm is presented and an example included for completeness.
\end{abstract}

\section{Introduction}

The idea of \emph{turnpike} in the Calculus of Variations or in Optimal Control describes
the (usual) phenomenon which takes place when, in problems with finite but arbitrarily
large time, the optimal solutions \emph{spend most of their time} near a specific
point. Even more, these solutions tend to be composed of three parts: an entry arc, the
turnpike arc, and the leaving arc. The first and last ones are \emph{transitory} arcs
which take little time of the solution, whereas the middle arc (the turnpike) is a long
arc which is essentially stationary, and tends to be exponentially near an equilibrium
(see \cite{Trelat Zuazua 2015}). Roughly speaking, in the long term, approximate solutions
to problems having a turnpike are determined essentially by the integrand function of the
objective functional, and are ---again, essentially--- independent of their endpoints and
time interval.

Although the first works on the topic investigated specific problems arising in the
context of economics and econometrics \cite{Samuelson 1958}, \cite{McKenzie 1976}, today
the turnpike property has become of interest in other areas \cite{Faulwasser 2014, Grune
  2016}. Recent studies have proposed its use in applications varying from
membrane-filtration systems \cite{Kalboussi Rapaport 2018} to control of chemical reactors
with uncertain models \cite{Faulwasser 2019} or shape optimization in aircraft design
\cite{Lance Trelat Zuazua 2020}.

The property has also been noticed in Optimal Control Problems of almost any type:
with/without terminal constraints \cite{Trelat Zuazua 2015}, \cite{Clarke 2013};
with/without discounted cost functionals \cite{Zaslavski 2014}, \cite{Gurman 2004};
discrete-time problems with constraints \cite{Grune 2016}, \cite{Damm 2014}; and in a
continuous-time problems without constraints \cite{Trelat Zuazua 2015}... Of course, no
work on the turnpike property can omit referencing Zaslavski's exhaustive studies, whose
results and complete references are collected in \cite{Zaslavski Springer 2006, Zaslavski
  2015 Springer}.

From a practical point of view, the interest of the turnpike phenomenon arises from the
fact that under this condition, the computation of (approximate) optimal trajectories in
all areas of optimal control and variational problems becomes trivial for long enough
time spans. In this sense, one of the first applications is \cite{Anderson Kokotovic 1987}, where a
time-invariant linear quadratic optimal control problem is studied. They prove that the
optimal trajectory is approximately composed of two solutions of two infinite-horizon
optimal control problems. With $x(0)$ fixed, the solution for the interval $(0,+\infty )$
defines the part of the trajectory for the original problem from $x=0$ to the
turnpike. With $x(T)$ fixed, the solution for the interval $(-\infty ,T)$ defines the part
of the trajectory of the original problem from the turnpike to $t=T$. The two parts are
then pieced together and exhibit a similar transient behavior. Their approach is
elementary and points out very clearly that the hyperbolicity phenomenon is the heart of
the turnpike property.

Recently, in \cite{Trelat Zuazua 2015}, the authors investigate the relation between the
turnpike propierty and numerical methods (direct and indirect) for a general nonlinear
optimal control problem, without any specific assumption, and for very general terminal
conditions. In the context of turnpike theorem, they provide a new method to ensure the
successful initialization of numerical methods. Assuming that the Pontryagin maximum
principle has been applied, the usual shooting method can be used. However, this is in
general very hard to initialize. As a solution, they propose a variant: as the extremal is
approximately known along the interval $[\varepsilon ,T-\varepsilon ]$, for some
$\varepsilon >0$ (i.e. the turnpike), but it not at the endpoints $t=0$ and $t=T,$ the
idea is choose some arbitrary point of $[\varepsilon ,T-\varepsilon ]$ (for instance
$t=T/2$), and then integrate backwards over $[0,T/2]$ to get an approximation to $x(0)$,
and forward over $[T/2,T]$ to get the approximation to $x(T)$. The unknown value of
$x(T/2)$ must be adjusted, for instance, through a Newton method, so that transversality
conditions are satisfied.

Even more recently, in the same spirit, in \cite{Faulwasser Grune
2020} the authors use the turnpike property in the numerical computation of
optimal trajectories, splitting the optimization horizon at the turnpike. They proceed as follows: given the turnpike equilibrium $x^{e}$, the optimization horizon $T>0$, an
initial condition $x(0)$ and, a terminal condition $x(T)$, they compute an
optimal trajectory $x_{1}(\cdot )$ with finite horizon $T_{1}<T$ and initial
and terminal conditions $x_{1}(0)=x(0)$, $x_{1}(T_{1})=x^{e}$; and an
optimal trajectory $x_{2}(\cdot )$ with horizon $T_{2}<T-T_{1}$ and initial
and terminal conditions $x_{2}(0)=x^{e}$, $x_{2}(T_{2})=x(T)$. Finally, an
approximation of the optimal trajectory is then obtained by concatenating
the three arcs: $x_{1}(t),$ $t\in \lbrack 0,T_{1}];$ $x^{e},$ $t\in \lbrack
T_{1},T-T_{2}]$ and $x_{2}(t-T+T_{2}),$ $t\in \lbrack T-T_{2},T].$ The
resulting error can be estimated if the speed of convergence towards the
turnpike is known (as in the case of exponential turnpike). They also use a
second approach via Model Predictive Control (MPC) which may have has some advantages. 

To illustrate the turnpike and their methods, they consider a well known harvest example
\cite{Cliff Vincent 1973}, with both bilinear and quadratic objective. Remarkably enough,
the authors do not seem to notice that in the free-endpoint case, the leaving arc ends
always at the same value of $x(T)$, regardless of $x(0)$ and $T$. Something similar
happens in \cite{Grune 2021}: the author, who studies two examples of optimal investment
problems, states literally: ``\textit{Without any terminal constraints all predictions end
  in $x=2$}'', but does not delve into this happening. We shall see that this is a general
property of turnpikes with free-endpoint solutions.

As a matter of fact, one of us had already noticed this in the previous paper \cite{Bayon
  2017}. There, a model of renewable resource exploitation in an open-access fishery
\cite{Agnarsson 2008}, more detailed and general than \cite{Cliff Vincent 1973}, is
studied. It was noticed that, without constraints on the terminal state (which force the
trajectory to leave the turnpike), the solution spontaneously leaves the turnpike in order
to reduce the cost of the overall trajectory, and the leaving arc always ends at the same
value of $x(T)$, for all $T$.

In this note, we intend to settle the question of the entry and leaving arcs of the
turnpike in the generic hyperbolic situation for \emph{variational problems}. The key
point was suggested in \cite{Bayon 2017} but not led to its natural consequence there. In
short, and loosely speaking, our statement can be summarized as follows (for autonomous
problems in $\mathbb{R}$): assume $P=(x_P,\dot{x}_P)$ is a turnpike for a problem with
initial condition $x_0$ and free terminal condition, and let $T(x,\dot{x})=0$ denote the
equation giving the transversality condition. Then:

\textbf{Statement:} There is a function $C(x,\dot{x})$ such that:
\begin{itemize}
\item The entry arc of the turnpike starts at
$$
Q_{e}=\left\{ x=x_0\right\} \cap \left\{C(x,\dot{x})=C(x_P,\dot{x}_{P}) \right\}.
$$
\item The leaving arc of the turnpike ends at 
  $$
  Q_{l}=\left\{ T(x,\dot{x})=0\right\}\cap
  \left\{C(x,\dot{x})=C(x_P, \dot{x}_P) \right\}.
  $$
\end{itemize}

The function $C(x,\dot{x})$ is well-known to any practitioner: it is the function whose
level sets are the extremals \cite{kielhofer 2018}. Certainly, the statement needs to be properly
formalized, but its spirit should be clear to anyone familiar with the turnpike
property. It is also more general (the problem may have both endpoints fixed, or none).

The main tool in our argument is to study the phase space of the plane vector field equivalent to the Euler equation associated to the variational problem. This vector field has very nice properties (among other things, its trajectories are both the extremals of the problem and the level sets of $C(x,\dot{x})$) and a direct application of the classical results on ordinary differential equations is enough to prove the statement.

The consequences of that result are straightforward: in order to determine the entry and leaving arcs, one only needs to know the intersection points between $C(x,\dot{x})$, the transversality condition and/or the initial and final conditions (if any). Once these points are known, the entry arc can be computed by {}forward{} integration, and the leaving arc by backwards integration, {}as the question has become an initial value problem at this point{}.

We hope this work provides a useful support for the study of long-term autonomous variational (and possibly control) problems near a steady state.

{}Our results are all straightforward consequences of the standard results on continuous dependence on parameters of solutions of ordinary differential equations, as well as the local structure of hyperbolic singularities. Despite this fact, we dedicate Section ~\ref{sec:saddles} to a thorough description of the geometric setting of the problem, with the aim of helping the reader understand the situation. We hope this is clearer, briefer and simpler than a technical proof which would provide no insight and would be no more informative than what we provide.

After the formal statements in Section \ref{sec:results} and a suggestion for an approximate algorithm, we dedicate Section \ref{sec:example} to a hopefully illustrative example, Section \ref{sec:simulations} to numerical computations in it. A final section provides some remarks on the $n$-dimensional case.
{}

\section{Statement of the problem}
Consider the autonomous variational problem in one dimension:
\begin{equation}
  \label{eq:P1}
    \mathcal{P}\equiv
    \left\{
    \begin{array}{l}
    \displaystyle\min \int_0^T F(x(t), \dot{x}(t))\,dt\\[1em]
    x(0) = x_0
    \end{array}
    \right.
\end{equation}
where $F$ is a $\mathcal{C}^2(\mathbb{R}^{2})$ function, and $T$ is large enough. It is well known since Samuelson \cite{Samuelson 1958} that many of
these problems have a \emph{turnpike}: a value $x_{P}$ such that ``most'' solutions of
\eqref{eq:P1} pass near it during a long period (i.s. $x(t)\simeq x_{P}$ and
$\dot{x}(t)\simeq 0$ for a ``large'' inner subinterval of $[0,T]$), for
$T\rightarrow \infty$. Moreover, as Zaslavsky has proved \cite{Zaslavski 2008}, there
are also initial and final curves (the \emph{entry} and \emph{leaving} arcs) $\gamma_e$
and $\gamma_l$ such that, as $T\rightarrow \infty$, any solution of that problem is very
near $\gamma_e$ at the beginning, then near $x_{P}$, and finally, it is near $\gamma_l$ in
the end. Of course, all the terms between quotation marks can be properly defined
\cite{Zaslavski 2014}.

However, despite all the results around turnpikes, and as we have remarked in the
Introduction, there is still no programmatic way to find their entry and exit
arcs. The aim of this work is to explicitly show which curves these arcs are and how to
compute them {}in the generic case{}.

\section{Extremal curves, level sets and hyperbolic saddles}\label{sec:saddles}
Given problem \eqref{eq:P1}, Euler's equation
\begin{equation}
  \label{eq:euler}
  \frac{\partial F(x(t),\dot{x}(t))}{\partial x} -
  \frac{d}{dt}\left(
    \frac{\partial F(x(t),\dot{x}(t))}{\partial \dot{x}}
    \right)=0
\end{equation}
is best rewritten, after simplifying a common factor $u$, for our purposes, as a vector field, using $x$ and $u$ as subindices to indicate partial differentiation with respect to the first and second variables:
\begin{equation}
  \label{eq:euler-campo}
  \mathcal{E}\equiv\left\{
    \begin{array}{l}
      \dot{x} = u\\[3pt]
      \displaystyle
      \dot{u} = \frac{F_x - u F_{x u}}{F_{uu}}
    \end{array}
  \right.
\end{equation}
This vector field might have singularities where $F_{uu}=0$ (this implies, for instance, that if the problem is strictly convex in $u$, then there are no such singularities). The extremal curves (solutions to Euler's equation) then coincide with the trajectories of $\mathcal{E}$. Moreover, as the problem is autonomous, it is well known (see, for instance \cite{kielhofer 2018}) that the following function

\begin{equation}\label{eq:constant-on-extremals}
  C(x,u) = F(x,u) - u F_u(x,u)
\end{equation}
is constant in the extremals. Thus, \emph{extremals, as $1$-dimensional manifolds, are the level sets of $C(x,u)$} in $\mathbb{R}^{2}$, for the problem $\mathcal{P}$.

Let us work away from the points where $F_{uu}=0$, that is, we limit ourselves to an open set $W$ where $F_{uu}(x,u)\neq 0$. Consider an equilibrium point $P$ of $\mathcal{E}$, that is: a point with $u=0, F_{x}-uF_{xu}=0$. The linear part of $\mathcal{E}$ is always of the form
\begin{equation*}
L=  \begin{pmatrix}
    0 & \star_{1}\\
    1 & \star_2
  \end{pmatrix}
\end{equation*}
where the stars are unknown values. Except in degenerate cases, $P$ is then either a
center-focus (when both eigenvalues of $L$ are complex), a node (both eigenvalues of $L$
are real and have the same sign) or a hyperbolic saddle (real eigenvalues with different
sign). Obviously, the nature of either center-foci or nodes prevents such a point from
being a turnpike with entry and leaving arcs: if $P$ is a center, trajectories turn around it, if it is a focus, then
they either converge to it (so that $P$ is not strictly speaking a turnpike) or move away
from it (again, not a turnpike). For the same reasons as for foci, nodes cannot be
turnpikes. Hence, turnpikes with entry and leaving arcs correspond, in the non-degenerate case, to hyperbolic saddles, as is well known.

Assume then that $P=(x_P,u_P)$ is a hyperbolic saddle of $\mathcal{E}$, which by
definition will have $u_P=0$ (this is exactly what makes $P$ a turnpike: near $P$, the
velocity of $\mathcal{E}$ tends to $0$ and extremals spend ``a long time'' near $P$). On
the other hand, we have $F_x(x_P,u_P)-u_PF_{xu}(x_p,u_P)=0$, which becomes at $P$ just
$F_x(x_P,0)=0$. As $P$ is a hyperbolic saddle, there are two invariant manifolds adherent
to $P$: the stable $X_{s}$ and unstable $X_{u}$ ones, meeting transversely at $P$ (see
Figure \ref{fig:hyperbolic-saddle}: the stable manifold ``falls'' towards $P$ and the
unstable one ``goes away'' from it). As these manifolds are unions of extremal curves
(they are trajectories of $\mathcal{E}$), they  correspond also to level curves of $C(x,u)$ and, as $P$ belongs
to both, if we denote by $M=X_s\cup X_{u}$ their union, must necessarily have:
\begin{equation*}
  M \equiv C(x,u) = C(x_P,0).
\end{equation*}
That is, the invariant set near $P$ is given by $C(x,u)=C(x_P,0)$.

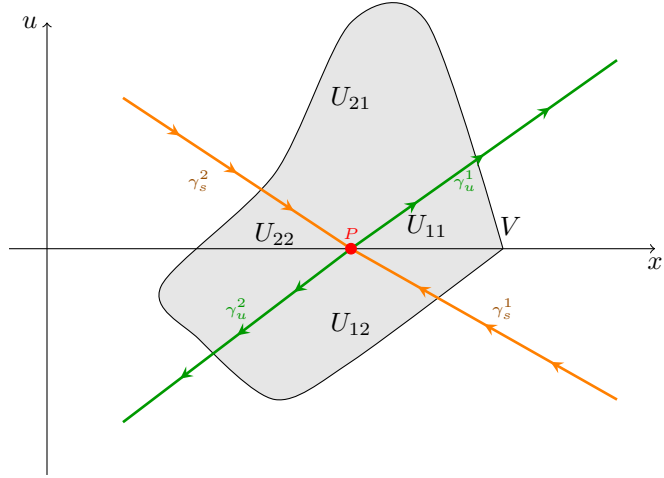
\begin{figure}[h!]
  \centering
  \begin{tikzpicture}[decoration={markings, 
    mark= at position 0.25 with {\arrow{stealth}},
    mark= at position 0.5 with {\arrow{stealth}},
    mark= at position 0.75 with {\arrow{stealth}}}]
  \draw[fill=gray!20!white] plot[smooth] coordinates {(6,0)   (5,3)   (4,3)   (3,1)
    (1.5,-0.5)    (2,-1.2)   (3, -2)   (4, -1.5)  (6,0)};
  \draw(6.1,0.3) node{$V$};
  \draw(5,0.3) node{$U_{11}$};
  \draw(4,2) node{$U_{21}$};
  \draw(3,0.2) node{$U_{22}$};
  \draw(4,-1) node{$U_{12}$};
  \draw[->] (-0.5,0) -- (8,0) node[anchor=north]{$x$};
  \draw[->] (0,-3) -- (0,3) node[anchor=east]{$u$};
  \draw[color=orange, line width=1pt,smooth,postaction={decorate}] (7.5,-2) -- (4,0);
  \draw[color=orange, line width=1pt,smooth,postaction={decorate}] (1,2)  -- (4,0);
  \draw[color=green!60!black, line width=1pt,postaction={decorate}] (4,0) --(7.5,2.5); 
  \draw[color=green!60!black, line width=1pt,postaction={decorate}] (4,0) -- (1,-2.3);
  \draw[fill=red,red](4,0) circle(2pt) node[anchor=south]{\tiny{$P$}};
  \draw[orange!60!black] (6,-0.8) node{\tiny{$\gamma_s^1$}};
    \draw[orange!60!black] (2,0.9) node{\tiny{$\gamma_s^2$}};
    \draw[green!60!black] (2.5,-0.8) node{\tiny{$\gamma_u^{2}$}};
    \draw[green!60!black] (5.5,0.9) node{\tiny{$\gamma_u^{1}$}};
\end{tikzpicture}
\caption{Hyperbolic saddle $P$ and the open sets $U_{ij}$.}
  \label{fig:hyperbolic-saddle}
\end{figure}

Near $P$, the set $M$ can be divided into $4$ different trajectories of $\mathcal{E}$: $\gamma_s^1, \gamma_s^2$, which are the two components of $X_s\setminus\left\{ P \right\}$ and $\gamma_{u}^1$, $\gamma_u^2$ for $X_{u}\setminus \left\{ P \right\}$. Any connected open set $V$ containing $P$ with sufficiently smooth border is divided by these four curves into four open subsets: $U_{11}$, $U_{12}$, $U_{21}$, $U_{22}$, each $U_{ij}$ corresponding to the ``angle'' defined by $\gamma_s^i$ and $\gamma_u^j$, in that order (See Figure \ref{fig:hyperbolic-saddle}).

The solutions of the variational problem $\mathcal{P}$ are extremals (so, they correspond
to trajectories of $\mathcal{E}$) which verify the initial condition $x(0)=x_0$ and also
satisfy the transversality condition $F_u(x(T),u(T))=0$. The equation given by the
trasnversality condition $F_{u}(x,u)=0$ defines (usually) a curve in the
$(x,u)$-plane. Figure \ref{fig:base-case} shows the ``general'' situation in which we find
ourselves. The trajectory {}$\gamma_{e}$, part of the stable manifold, and $\gamma_{l}$, part of the unstable one,{} are
the entry and leaving arcs, respectively.

\begin{figure}[h!]
  \centering
  \begin{tikzpicture}[decoration={markings, 
    mark= at position 0.25 with {\arrow{stealth}},
    mark= at position 0.5 with {\arrow{stealth}},
      mark= at position 0.75 with {\arrow{stealth}}}] 
  \draw[->] (-0.5,0) -- (8,0) node[anchor=north]{$x$};
  \draw[->] (0,-3) -- (0,3) node[anchor=east]{$u$};
  \draw[white,fill=cyan!30!white] plot[smooth]
  coordinates { (7.5,-2.3)   (5,-1) (4,-0.5)  (3,-1.1)  (1.5, -2.3) (1,-2.7) }
  -- (1.4,-2.35) -- (1.15,-2.2) -- (4,0) -- (7.5,-2) -- cycle;
    \path[fill=yellow,line width=0pt] (4,-.025) -- (4.5,-0.32) -- (3.6,-0.32) -- (4,-0.025);
  \draw[color=black,dashed] (7.5,3) -- (7.5,-3) node[anchor=west]{$x=x_0$};
  \draw[black,postaction={decorate}, line width=0.5pt] plot[smooth]
  coordinates { (7.5,-2.3)   (5,-1) (4,-0.5)  (3,-1.1)  (1.5, -2.3) (1,-2.7) };
  \draw (4.2,-0.8) node{\tiny{$\gamma$}};
  \draw[color=orange, line width=1pt,smooth,postaction={decorate}] (7.5,-2) -- (4,0);
  \draw[color=orange, line width=1pt,smooth,postaction={decorate}] (1,2)  -- (4,0);
  \draw[color=green!80!black, line width=1pt,postaction={decorate}] (4,0) --(7.5,2.5); 
  \draw[color=green!80!black, line width=1pt,postaction={decorate}] (4,0) -- (1,-2.3);
  \draw[fill=red,red](4,0) circle(2pt) node[anchor=south]{\tiny{$P$}};
  \draw[orange,dashed,line width=1pt] plot[smooth] coordinates {(-0.3,0.3) (0,-0.3) (1, -2) (2,-2.5) (4, -2.5) (5,-3)};
  \draw[orange!80!black] (0,-0.3) node[anchor=east]{\tiny{$F_u(x,u)=0$}};
  ;
  \draw[dashed,line width=0.5pt,fill=black] (1.15,-2.17) circle(.5pt) -- (1.15,0) circle(1pt);
  \draw (1.3,0) node[anchor=south east]{\tiny{$x_{l}$}};
  \draw[dashed,line width=0.5pt,fill=black] (1.45,-2.35) circle(.5pt) -- (1.45,0) circle(1pt);
  \draw (1.7,0) node[anchor=south]{\tiny{$x(T)$}};
  \draw (1.3,0.03)node[anchor=south] {\tiny{$\leftarrow$}};
  \draw[orange!80!black] (6,-0.9) node{\tiny{$\gamma_{e}$}};
  \draw[green!80!black] (2.5,-0.9) node{\tiny{$\gamma_{l}$}};
  \draw[fill=black](1.15,-2.17) circle(1.5pt);
  \draw(0.85,-2.17) node{$Q_{l}$};
  \draw(7.9,-2) node{$Q_{e}$};
  \draw[fill=black](7.5,-2) circle(1.5pt);
\end{tikzpicture}
\caption{Hyperbolic saddle $P$ (turnpike), extremal ($\gamma$), and entry ($\gamma_{e}$) and leaving ($\gamma_{l}$) arcs. In yellow, the ``slow'' zone. As long as there are no singularities of $\mathcal{E}$ in the cyan zone, the turnpike property holds inside it, and as  $T\rightarrow \infty$, the corresponding extremal of $\mathcal{P}$ approaches $\gamma_{e}$ at the beginning and $\gamma_{l}$ at the end. The entry arc starts at $Q_{e}$ and the leaving arc ends at $Q_{l}$.}
  \label{fig:base-case}
\end{figure}
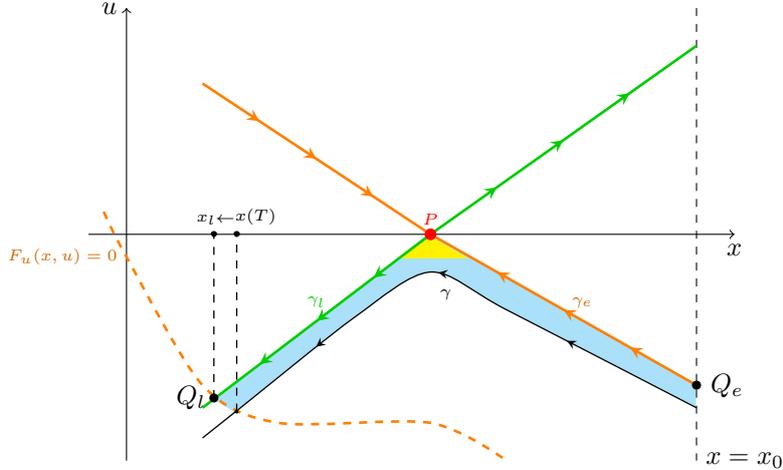

The intersection points between the transversality condition $F_u(x,u)=0$ and $M$ are, key
in our statements. These points are the solutions of the system of equations:
\begin{equation*}
  \left\{
    \begin{array}{l}
      F(x,u) -u F_u(x,u)= C(x_P,0)\\
      F_u(x,u) = 0
    \end{array}
  \right.
\end{equation*}
which, after simplifying, becomes (see \cite{Bayon 2017}, where this system of equations appeared for the first time):
\begin{equation}
  \label{eq:bayon}
  \left\{
    \begin{array}{l}
    F(x,u) = C(x_P,0)\\
    F_u(x,u) = 0
    \end{array}
  \right.
\end{equation}
In the problem $\mathcal{P}$, the initial value $x(0)=x_0$ is set. Assume
$Q_{e}=(x_0,u_{e})$ belongs to $x=x_{0}\cap \{C(x,u)=C(x_P,0)\}$ and to the stable
manifold of $P$, and let $Q=(x_{l},u_{l})$ be the solution of \eqref{eq:bayon} in the
unstable manifold (as in Figure \ref{fig:base-case}) which is nearest to $P$. Under these assumptions, the Turnpike property happens
relative to $P$ (as in Figure \ref{fig:base-case}) and extremals with $x(0)=x_0$ start
near $Q_{e}=(x_{0},u_{e})$, so that $u(0)\rightarrow u_{e}$, and finish near
$Q_{l}$, so that and $x(T)\rightarrow x_{l}$ as
$T\rightarrow\infty$.

If there existed a solution $R=(x_{e}, u_{e})$ of \eqref{eq:bayon}, belonging to
the stable manifold and satisfying the transversality condition (this case is not plotted
in Figure \ref{fig:base-case}), a dual argument using $f(x,-\dot{x})$ shows that there are
extremals of the variational problem with no initial or terminal condition:
\begin{equation}
  \label{eq:P1-libre}
    \mathcal{P}^{\prime}\equiv
    \min \int_0^T F(x(t), \dot{x}(t))\,dt
\end{equation}
with starting points $x(0)\rightarrow x_{e}$ (and endpoints ending at $Q_{l}$ as above). 

This theoretical description which is just a qualitative expression of the well-known results on the continuous dependence of solutions of ODEs on the parameters, and of the local structure of hyperbolic singularities (see \cite{Ilyashenko}, for example) is enough to prove our results, so that instead of proofs, we just refer to this section.

Our statements have two versions: one in which a solution $\gamma$ of $\mathcal{P}$ is
already known, and one in which all depends just on the solutions of \eqref{eq:bayon}.

\section{Statements of the results}\label{sec:results}

As explained in the introduction, $F(x,u)$ is of class $\mathcal{C}^2$ in $\mathbb{R}^2$, and all our statements are in an open set $W\subset \mathbb{R}^{2}$ where $F_{uu}(x,u)\neq 0$. We shall make frequent reference to the vector field $\mathcal{E}$ defined in (\ref{eq:euler-campo}).

Our first result assumes the existence of an extremal ``sufficiently'' near a hyperbolic turnpike:
\begin{theorem}\label{the:structure-with-extremal}
  Let $P=(x_P,u_P)$ be a hyperbolic saddle of $\mathcal{E}$ and $\gamma$ an extremal of $\mathcal{P}$ included in an open set $U\subset W$ containing $P$ which admits a subdivision $U_{ij}$ for $i,j\in \left\{ 1,2 \right\}$ as above. We assume the following conditions:
  \begin{enumerate}
  \item\label{enum:transverse-trans} The curve $F_u(x,u)=0$ meets $\gamma_u^1$ and
    $\gamma$ transversely at the points $Q=(x_{l},u_{l})$ and $(x(T),u(T))$.
  \item\label{enum:transversality-parametrized} That curve $F_u(x,u)=0$ admits an
    injective parametrization near $Q$, $\eta:[-1,1]\rightarrow \mathbb{R}^2$ with
    $\eta(0)=Q$, $\eta(1)=(x(T),u(T))$ such that $\eta$ is transverse to any extremal
    meeting it.
  \item\label{enum:transverse-gamma} The extremals $\gamma_s^1$ and $\gamma$ meet the
    manifold $x=x_0$ transversely at $(x_0,u_{e})$, $(x_0,u_0)$ respectively.
  \item\label{enum:no-more-sings} The open set $V$ (delimited by $\gamma^1_s$ and
    $\gamma_u^1$, $\eta$ and the line $x=x_0$ contains no more singularities of
    $\mathcal{E}$.
  \end{enumerate}
  Then:
  \begin{enumerate}
  \item For any $\overline{T}>T$, the problem
    \begin{equation*}
      \mathcal{S}_{\overline{T}} \
          \left\{
    \begin{array}{l}
    \displaystyle\min \int_0^{\overline{T}} F(x(t), \dot{x}(t))\,dt\\[1em]
    x(0) = x_0
    \end{array}
    \right.
  \end{equation*}
  has an extremal $\gamma$ which is totally included in $V$.
\item For any $\epsilon >0$ there is $T_{\epsilon}>T$, $T_{\epsilon,{e}}>0$ and $T_{\epsilon,\infty}<T$, such that, for $\overline{T}>T_{\epsilon}$, the corresponding extremal  $\gamma_{\overline{T}}$ satisfies:
  \begin{itemize}
  \item The metric distance between $\gamma_{\overline{T}}:[0,T_{e}]\rightarrow \mathbb{R}^2$ and $\gamma_{e}$ is less than $\epsilon$.
  \item The metric distance between $\gamma_{\overline{T}}:[T_{e}, T_{l}]\rightarrow \mathbb{R}^2$ and $P$ is less than $\epsilon$.
  \item The metric distance between $\gamma_{\overline{T}}:[T_{l}, \overline{T}]\rightarrow \mathbb{R}^2$ and $\gamma_{l}$ is less than $\epsilon$.
  \end{itemize}
  Moreover, one can also choose $T_{e}$ and $T_{l}$ such that $T_{l}-T_{e}\rightarrow \infty$ as $\epsilon \rightarrow 0$ and $T_l,T_e<K$ for some $K<\infty$.
  \end{enumerate}
\end{theorem}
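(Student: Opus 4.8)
The plan is to deduce everything from the two facts recalled in Section~\ref{sec:saddles}: continuous dependence of the flow $\phi^t$ of $\mathcal{E}$ on the initial condition, and the local phase portrait of the hyperbolic saddle $P$ (the stable/unstable manifolds $X_s,X_u$ and the $\lambda$-lemma). I would organise the argument around the \emph{shooting family} $c_s(t):=\phi^t(x_0,s)$ of extremals of $\mathcal{P}$, so that $c_s$ is the extremal with $x(0)=x_0$ and $\dot x(0)=s$; here $c_{u_0}$ is the given extremal $\gamma$, while $c_{u_e}$ is the forward orbit of $Q_e=(x_0,u_e)\in\gamma_s^1\subset X_s$ and hence $c_{u_e}(t)\to P$ as $t\to\infty$. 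The confinement will be handled by the conserved quantity $C(x,u)$ of \eqref{eq:constant-on-extremals}, using that each $c_s$ lives on a single level set of $C$ and that the critical points of $C$ in $W$ (where $C_u=-uF_{uu}$) are exactly the equilibria of $\mathcal{E}$.

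First I would read off the behaviour of $\mathcal{E}$ on $\partial V=A_1\cup A_2\cup A_3\cup A_4$, with $A_1$ the segment of $\{x=x_0\}$ from $Q_e$ to $(x_0,u_0)$, $A_2=\gamma|_{[0,T]}$, $A_3=\eta$ and $A_4=\gamma_u^1\cup\{P\}\cup\gamma_s^1$: $\mathcal{E}$ is tangent to $A_2$ and $A_4$ (these are trajectories, $P$ a rest point); on the interior of $A_1$ one has $\dot x=u$ with constant sign in the configuration of Figure~\ref{fig:base-case}, so $\mathcal{E}$ points strictly into $V$; and on the interior of $\eta$, condition~\ref{enum:transversality-parametrized} forces $\mathcal{E}$ to be transverse to $\eta$ and hence on one fixed side of it, which is the complement of $V$ — as one checks at the endpoints of $\eta$ from condition~\ref{enum:transverse-trans} (through $\gamma$ at $(x(T),u(T))$, through $\gamma_u^1$ at $Q$, both crossing $\eta$ outward). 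Consequently a trajectory $c_s$ with $s$ strictly between $u_e$ and $u_0$ enters $\mathrm{int}\,V$ immediately, cannot touch the invariant arcs $A_2,A_4$ (uniqueness of solutions, as $F\in\mathcal{C}^2$), cannot re-exit across $A_1$, and so the only escape from $\overline V$ is across the interior of $\eta\subset\{F_u=0\}$.

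Next I would show such an escape occurs at a finite time $\tau(s)$, and that $\tau$ depends continuously and surjectively on $s$. If $c_s$ remained in $\overline V$ forever, then — $\overline V$ being a disk (conditions~\ref{enum:transverse-trans} and \ref{enum:transverse-gamma} make its corners transverse) carrying, by condition~\ref{enum:no-more-sings}, no equilibrium of $\mathcal{E}$ other than the boundary point $P$, and no periodic orbit (a periodic orbit would bound a disk inside $\overline V$ containing an equilibrium) — Poincaré--Bendixson would give $c_s(t)\to P$, hence $C(c_s(0))=C(x_P,0)$; but $C(x_0,\cdot)$ is strictly monotone along $A_1$ (its only critical point is at $u=0$, outside the relevant range in the configuration of Figure~\ref{fig:base-case}), so the unique such $c_s(0)$ on $A_1$ is $Q_e$, i.e.\ $s=u_e$, a contradiction. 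Thus $c_s$ escapes at a first escape time $\tau(s)<\infty$, and $c_s|_{[0,\tau(s)]}$ is an extremal of $\mathcal{S}_{\tau(s)}$ lying in $\overline V$. Now $\tau:(u_e,u_0)\to\mathbb{R}_{>0}$ is continuous by the implicit function theorem (condition~\ref{enum:transversality-parametrized} again gives transversality of $\mathcal{E}$ with $\eta$ at the escape point), it tends to $T$ as $s\to u_0$ (continuous dependence on $[0,T+1]$ and the transversal crossing of $\gamma$ with $\eta$, condition~\ref{enum:transverse-trans}), and it tends to $\infty$ as $s\to u_e$ (continuous dependence on compact time intervals, since $c_{u_e}$ never leaves $\overline V$). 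The intermediate value theorem then supplies, for every $\overline T>T$, an $s_{\overline T}\in(u_e,u_0)$ with $\tau(s_{\overline T})=\overline T$; setting $\gamma_{\overline T}:=c_{s_{\overline T}}|_{[0,\overline T]}$ proves part~(1). For part~(2), note $\tau$ is bounded on every $[u_e+\delta,u_0]$ (it extends continuously with value $T$ at $u_0$ and only blows up at $u_e$), so $\tau(s_{\overline T})=\overline T$ forces $s_{\overline T}\to u_e$ as $\overline T\to\infty$; then, fixing $\epsilon$ and taking $T_{\epsilon,e}$ to be the first time $c_{u_e}$ enters $B(P,\epsilon/2)$, continuous dependence on $[0,T_{\epsilon,e}]$ gives $\mathrm{dist}(\gamma_{\overline T}|_{[0,T_{\epsilon,e}]},\gamma_e)<\epsilon$ for $\overline T$ large; from $\gamma_{\overline T}(T_{\epsilon,e})\in B(P,\epsilon)$ near $X_s$, the local saddle structure keeps $\gamma_{\overline T}$ inside $B(P,\epsilon)$ (after adjusting constants) over an interval $[T_{\epsilon,e},T_l]$ whose length diverges as the entry point approaches $X_s$, i.e.\ as $\overline T\to\infty$; and after $T_l$ the trajectory leaves near $X_u$ and, by continuous dependence over the bounded time $T_{\epsilon,\infty}$ in which $\gamma_u^1$ runs from $B(P,\epsilon)$ out to $\eta$, stays within $\epsilon$ of $\gamma_l$ until the escape at $\overline T=T_l+T_{\epsilon,\infty}$.

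The main obstacle is not any single step but the bookkeeping in part~(2): one must choose $T_{\epsilon,e}$, the diverging sojourn $T_l-T_{\epsilon,e}$, and the bounded tail time $T_{\epsilon,\infty}$ consistently (only the first and the last stay bounded as $\overline T\to\infty$), and one must be careful that the two ``limiting'' extremals $\gamma=c_{u_0}$ and $\gamma_e=c_{u_e}$ lie on $\partial V$, not in its interior, so each invocation of continuous dependence has to be read as ``$c_s$ for $s$ slightly inside stays uniformly near the boundary trajectory and escapes near where it does''. Everything else is a direct quotation of the continuous-dependence theorem, the $\lambda$-lemma, the Poincaré--Bendixson theorem, and the conservation of $C$ recorded in \eqref{eq:constant-on-extremals}.
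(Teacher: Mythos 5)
Your argument is correct, but it is worth saying that the paper does not actually contain a proof to compare it against: the printed proof of Theorem~\ref{the:structure-with-extremal} is two sentences asserting that both conclusions ``follow from the continuous dependence of solutions on parameters'' together with the qualitative description of Section~\ref{sec:saddles} and the local structure of hyperbolic saddles. Your proposal is a genuine elaboration of exactly that sketch, and the extra machinery you bring in is precisely what the paper leaves implicit: the shooting family $c_s$, the boundary analysis of $V$ (invariance of $\gamma$ and of $\gamma_s^1\cup\{P\}\cup\gamma_u^1$, inward-pointing field on $x=x_0$, one-sided transversality along $\eta$), the Poincar\'e--Bendixson plus conservation-of-$C$ argument that rules out non-escaping orbits other than $c_{u_e}$, and the intermediate value theorem applied to the escape time $\tau(s)$ to realize every horizon $\overline{T}>T$. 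None of this appears in the paper, and the $\tau(s)$/IVT step in particular is the actual mathematical content of conclusion (1); so your route is the same in spirit but is a complete proof where the paper offers a citation. Two small caveats, both inherited from the theorem statement rather than introduced by you: the list in hypothesis~\ref{enum:no-more-sings} omits $\gamma$ itself from the boundary of $V$, so your identification $\partial V=A_1\cup A_2\cup A_3\cup A_4$ (with $A_2=\gamma|_{[0,T]}$) is a reading of Figure~\ref{fig:base-case} rather than of the hypotheses; and your monotonicity of $C(x_0,\cdot)$ on $A_1$ and the inward-pointing of $\mathcal{E}$ there both use that $u_e$ and $u_0$ have the same (nonzero) sign, which again holds in the pictured configuration but is not stated as a hypothesis. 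You flag both points yourself, and they do not affect the validity of the argument in the setting the theorem intends.
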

\begin{proof}
  The first conclusion follows from the continuous dependence of solutions of an ordinary differential equation on the parameters (and from all the qualitative descriptions in Section \ref{sec:saddles}). The second one follows also from the local structure of hyperbolic saddle singularities, see for instance \cite{Ilyashenko}.
\end{proof}

\begin{definition}\label{def:entry-exit}
The curve $\gamma_{e}$ from $x=x_0\cap \gamma_{e}$ to $P$ is called the \emph{entry arc} to the turnpike $P$. The curve $\gamma_{l}$ from $P$ to $Q$ is called the \emph{leaving arc} of the turnpike $P$.
\end{definition}

The previous statement seems to require a lot from the equation. As it happens, most of
the hypotheses are just technical and will hold in generic situations. On the other hand,
we can also say a lot (locally) if we just know that the transversality condition meets
$\gamma_{l}$ transversely. The following result is again a straightforward consequence of the local structure of hyperbolic saddles and the continuous dependence on the parameters of solutions of ODEs. All the statements are inside $W\subset F_{uu}(x,u)\neq 0$.
\begin{theorem}\label{the:TC-meets-gamma-omega}
  Let $P$ be a hyperbolic saddle and $\gamma_{l}\subset \gamma_u^1\cup \gamma_u^2$
  one of the components of the unstable manifold. Assume that the transversality condition
  $Tr\equiv F_u(x,u)=0$ meets $\gamma_{l}$ transversely at
  $Q_{l}=(x_{l}, u_{l})$ and that there are no more intersection points between $Q_l$
  and $P$ belonging to $\gamma_{l}$. Then there is $\epsilon>0$ and a parametrization
  $\tau:[-\epsilon,\epsilon]\rightarrow T_c$ with $\tau(0)=Q_{l}$ such that the two
  extremals containing $\tau(-\epsilon)$ and $\tau(\epsilon)$ satisfy all the properties
  of Theorem \ref{the:structure-with-extremal}. As a consequence, $P$ is a turnpike for
  $\mathcal{P}$. Moreover, assume $\gamma_s^1$ is to the right of $P$ and $\gamma_s^2$ is to its
  left. Assume, for simplicity, that $Q_{l}$ is to the left of $P$
  (i.e. $x_{l}<x_P$). Then:
  \begin{enumerate}
  \item The point $P$ is a turnpike for $\mathcal{P}$ for $x_0\in (x_P,x_P+\epsilon)$, for
    some $\epsilon>0$, $\gamma_{l}$ (from $P$ to $Q$) is the leaving arc and
    $\gamma_s^1$ (from $x=x_0$ to $P$) is the entry arc.
  \item At the same time, $P$ is a turnpike for $\mathcal{P}$ for
    $x_0\in(x_P-\epsilon,x_P)$ for some $\epsilon>0$, $\gamma_{l}$ (from $P$ to $Q$)
    is the leaving arc and $\gamma_s^2$ (from $x=x_0$ to $P$) is the entry arc.
  \end{enumerate}
\end{theorem}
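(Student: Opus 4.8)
The plan is to derive both conclusions from Theorem~\ref{the:structure-with-extremal} by manufacturing, out of the single transversality datum, the two ``seed'' extremals that theorem needs, and then to bookkeep which branch of the stable manifold is selected depending on the side of $P$ on which $x_{0}$ sits. Throughout we work inside a neighbourhood $U\subset W$ of $P$, provided by the stable/unstable manifold theorem, in which $\mathcal{E}$ has $P$ as its only singularity, the local stable and unstable manifolds are $\gamma_s^1\cup\gamma_s^2$ and $\gamma_u^1\cup\gamma_u^2$ crossing transversely at $P$, and $U$ is cut into the four sectors $U_{11},U_{12},U_{21},U_{22}$ of Section~\ref{sec:saddles}; after relabelling we may assume $\gamma_{l}=\gamma_u^1$, and by hypothesis $Q_{l}\in\gamma_u^1$ with $Tr\equiv\{F_u=0\}$ meeting $\gamma_u^1$ transversely and only at $Q_{l}$ between $P$ and $Q_l$.

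First I would set up the parametrization. Shrinking $U$ so that $Tr$ is an embedded arc through $Q_{l}$, choose an injective parametrization $\tau$ of $Tr$ near $Q_{l}$ with $\tau(0)=Q_{l}$. Since $Tr$ crosses $\gamma_u^1$ transversely at $Q_{l}$, for $\epsilon>0$ small the endpoints $\tau(-\epsilon)$ and $\tau(\epsilon)$ lie on opposite sides of $\gamma_u^1$, hence in the two sectors adjacent to it, namely $U_{11}$ and $U_{21}$; say $\tau(\epsilon)\in U_{11}$ and $\tau(-\epsilon)\in U_{21}$. Through each of $\tau(\pm\epsilon)$ passes a unique trajectory $\gamma_{\pm}$ of $\mathcal{E}$, i.e.\ a unique extremal. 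Because $\gamma_{+}$ is $O(\epsilon)$-close to the unstable manifold, the inclination ($\lambda$-)lemma together with continuous dependence on initial data (see \cite{Ilyashenko}) gives its asymptotic shape: followed backwards in time it approaches $P$, passes arbitrarily close to it (the closer, the smaller $\epsilon$), and then leaves $U$ hugging the branch $\gamma_s^1$ of the stable manifold that bounds $U_{11}$; followed forwards it leaves $U$ hugging $\gamma_u^1$ and crosses $Tr$ transversely near $Q_{l}$. Symmetrically, $\gamma_{-}$ backwards hugs $\gamma_s^2$.

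Next I would verify the four hypotheses of Theorem~\ref{the:structure-with-extremal} for the pair $(P,\gamma_{+})$, after choosing $x_{0}$. Hypothesis~(\ref{enum:transverse-trans}) holds: $Tr$ meets $\gamma_u^1$ transversely at $Q_{l}$ by assumption and meets $\gamma_{+}$ transversely at $\tau(\epsilon)$ by construction and openness of transversality. Hypothesis~(\ref{enum:transversality-parametrized}) is the restriction of $\tau$ to a small interval, which stays transverse to every nearby extremal by the same openness. Hypothesis~(\ref{enum:transverse-gamma}): since $\gamma_s^1$ lies to the right of $P$ and the backward orbit of $\gamma_{+}$ hugs $\gamma_s^1$, both $\gamma_s^1$ and $\gamma_{+}$ cross the vertical line $x=x_{0}$ transversely once $x_{0}\in(x_{P},x_{P}+\epsilon)$ with $\epsilon$ small. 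Hypothesis~(\ref{enum:no-more-sings}): for $x_{0}$ and $\epsilon$ small the region $V$ bounded by $\gamma_s^1$, $\gamma_u^1$, the arc of $Tr$ and the line $x=x_{0}$ is contained in $U$, which by construction has no singularity other than $P$ (and $P$ lies on $\partial V$). Theorem~\ref{the:structure-with-extremal} therefore applies to $\gamma_{+}$, and its conclusion is exactly statement~(1): $P$ is a turnpike for $\mathcal{P}$ with $x_{0}\in(x_{P},x_{P}+\epsilon)$, leaving arc $\gamma_{l}$ (from $P$ to $Q_l$) and entry arc $\gamma_s^1$ (from $x=x_0$ to $P$). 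Running the identical argument with $\gamma_{-}$, whose backward orbit hugs $\gamma_s^2$ (to the left of $P$), and with $x_{0}\in(x_{P}-\epsilon,x_{P})$, yields statement~(2); and, $\gamma_{+},\gamma_{-}$ being the two extremals through $\tau(\pm\epsilon)$, this also proves the first, unlabelled assertion that $P$ is a turnpike for $\mathcal{P}$.

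The only genuinely delicate point is the asymptotic description of $\gamma_{\pm}$ near $P$ --- that a trajectory launched $O(\epsilon)$ away from the unstable manifold really shadows the broken orbit $\gamma_s^{i}\cup\{P\}\cup\gamma_{l}$ and spends a time tending to $\infty$ near $P$ as $\epsilon\to0$ --- together with keeping the sector labels consistent with the stated geometry (``$\gamma_s^1$ to the right, $\gamma_s^2$ to the left, $Q_{l}$ to the left of $P$''), which is what fixes \emph{which} $\gamma_s^{i}$ becomes the entry arc in each case. Both are standard facts of local hyperbolic theory (linearisation and the $\lambda$-lemma); once they are in place the rest is just the openness of transversality and continuous dependence on parameters already invoked in the proof of Theorem~\ref{the:structure-with-extremal}. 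The case $x_{l}>x_{P}$ is symmetric, obtained by reflecting the whole picture.
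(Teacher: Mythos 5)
Your proposal is correct and follows exactly the route the paper intends: the paper gives no formal proof of Theorem~\ref{the:TC-meets-gamma-omega}, stating only that it is ``a straightforward consequence of the local structure of hyperbolic saddles and the continuous dependence on the parameters of solutions of ODEs'' and deferring to the qualitative picture of Section~\ref{sec:saddles}. Your argument --- taking the two trajectories through $\tau(\pm\epsilon)$ in the sectors $U_{11}$ and $U_{21}$ adjacent to $\gamma_u^1$, using the $\lambda$-lemma to show they shadow $\gamma_s^1\cup\{P\}\cup\gamma_l$ and $\gamma_s^2\cup\{P\}\cup\gamma_l$ respectively, and then checking the four hypotheses of Theorem~\ref{the:structure-with-extremal} --- is precisely that sketch carried out in detail, including the sector bookkeeping that explains the switch of entry arc across $x_0=x_P$.
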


Notice how there is a switch of entry arcs when the initial condition $x_0$ changes from being ``greater than $x_{P}$''  to ``less than $x_P$''. This is easy to see in Figure \ref{fig:base-case}: if $x_0$ is less than $x_P$, the extremals ending near $Q$ must approach, at their beginning, the top-left separatrix for $T\rightarrow \infty$, instead of $\gamma_{e}$. Obviously, for the dual problem (final condition set but initial condition free), it is the leaving arc that changes.

Consider the problem \eqref{eq:P1-libre} with no initial or final condition. Near a hyperbolic singularity $P$ of $\mathcal{E}$ we \emph{may} have a turnpike result if the system of equations \eqref{eq:bayon} has two solutions near $P$. Again, everything is restricted to some open set $W\subset F_{uu}(x,u)\neq 0$.

\begin{theorem}\label{the:free-points}
  With the notations above, assume $P$ is a hyperbolic singularity of $\mathcal{E}$. If $Q_{e}\in \gamma_s^{1}$ and $Q_{l}\in \gamma_u^1$ are two solutions of \eqref{eq:bayon} and there are no more solutions of \eqref{eq:bayon} between $Q_{e}$ and $P$ and $P$ and $Q_{l}$, then $P$ is a turnpike for the problem \eqref{eq:P1-libre}. That is, for $T\rightarrow \infty$, there are extremals $\gamma=(x(t),u(t))$ of \eqref{eq:P1-libre} satisfying:
  \begin{enumerate}
  \item The origin tends to $Q_{e}$: $(x(0),u(0))\rightarrow Q_{e}$,
  \item The end tends to $Q_{l}$: $(x(T),u(T))\rightarrow Q_{l}$,
  \item The curve $\gamma$ approaches the part of $\gamma_s^{1}$ between $Q_{e}$ and $P$ at the beginning (the entry arc),
  \item The curve $\gamma$ approaches the part of $\gamma_u^{1}$ between $P$ and $Q_{l}$ at the end (the leaving arc).
  \end{enumerate}
\end{theorem}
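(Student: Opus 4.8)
The plan is to reduce the free-endpoint problem \eqref{eq:P1-libre} to two applications of Theorem \ref{the:structure-with-extremal}, one for the entry arc and one (via a time-reversal duality) for the leaving arc, with the two solutions of \eqref{eq:bayon} playing the role that the initial line $x=x_0$ and the transversality curve played there. The first step is to observe that for the problem \eqref{eq:P1-libre}, which has neither an initial nor a terminal constraint, the natural boundary conditions impose the transversality condition $F_u(x,u)=0$ at \emph{both} $t=0$ and $t=T$. Hence an extremal $\gamma$ of \eqref{eq:P1-libre} is a trajectory of $\mathcal{E}$ whose endpoints both lie on the curve $Tr\equiv\{F_u(x,u)=0\}$. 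Because $\gamma$ is a level set of $C(x,u)$, if it is to pass near $P$ its value of $C$ must be close to $C(x_P,0)$; and the endpoints on $Tr$ with $C$-value exactly $C(x_P,0)$ are precisely the solutions of \eqref{eq:bayon}, namely $Q_e$ and $Q_l$ by hypothesis. So the geometry forces the picture: the candidate limiting extremal is the broken curve $\gamma_s^1\cup\{P\}\cup\gamma_u^1$ running from $Q_e$ down the stable manifold to $P$ and out the unstable manifold to $Q_l$.

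Next I would set up the two one-sided problems. For the leaving arc, apply Theorem \ref{the:TC-meets-gamma-omega} directly: $Tr$ meets $\gamma_u^1$ transversely at $Q_l$ (transversality is generic here, and in any case $Q_l$ being an isolated solution of \eqref{eq:bayon} on $\gamma_u^1$ with $C$ nonconstant along $Tr$ gives it), and there are no further intersections between $Q_l$ and $P$; this yields a parametrization $\tau$ of a piece of $Tr$ near $Q_l$ such that the extremals through $\tau(\pm\epsilon)$ behave as in Theorem \ref{the:structure-with-extremal}, i.e. they shadow $\gamma_u^1$ from near $P$ to near $Q_l$, with transit time tending to $\infty$. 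For the entry arc, I would pass to the dual problem obtained by the substitution $t\mapsto -t$, equivalently replacing $F(x,\dot x)$ by $F(x,-\dot x)$: this sends $\mathcal{E}$ to its time-reversal, interchanges stable and unstable manifolds at $P$, sends $\gamma_s^1$ to the unstable manifold of the reversed field, and leaves the transversality curve $\{F_u(x,u)=0\}$ invariant as a set (since $F_u(x,-u)$ vanishes on the reflected curve, and at points with $u=0$ — in particular near $Q_e$ if $Q_e$ is close to the $u=0$ axis — the analysis is symmetric; more carefully, one checks $Q_e$ is a solution of the reflected version of \eqref{eq:bayon}). Then Theorem \ref{the:TC-meets-gamma-omega} applied to the dual field at $P$ with the intersection point $Q_e$ produces extremals of the dual problem shadowing (the dual of) $\gamma_s^1$ from near $P$ to near $Q_e$; undoing the time reversal turns these into extremals of \eqref{eq:P1-libre} shadowing $\gamma_s^1$ from near $Q_e$ to near $P$.

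The final step is to glue. Choosing the parameter $\epsilon$ simultaneously small on both sides, continuous dependence on parameters (Section \ref{sec:saddles}) lets me match the near-$P$ exit of the entry family to the near-$P$ entrance of the leaving family: for each large $T$ there is a unique extremal of $\mathcal{E}$ of total time $T$ whose two endpoints lie on $Tr$ and which stays in the open set $V$ bounded by $\gamma_s^1$, $\gamma_u^1$ and the two arcs of $Tr$ near $Q_e,Q_l$ — this is exactly the no-extra-singularities hypothesis \ref{enum:no-more-sings} of Theorem \ref{the:structure-with-extremal}, which I get from the assumption that \eqref{eq:bayon} has no solutions strictly between $Q_e$ and $P$ or between $P$ and $Q_l$. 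Splitting $[0,T]$ as $[0,T_e]\cup[T_e,T_l]\cup[T_l,T]$ and invoking the two one-sided conclusions gives properties (1)–(4), with $(x(0),u(0))\to Q_e$ and $(x(T),u(T))\to Q_l$ as $T\to\infty$ because the shadowing error in Theorem \ref{the:structure-with-extremal} goes to $0$ with $\epsilon$, which in turn is forced by $T\to\infty$.

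I expect the main obstacle to be making the time-reversal duality fully rigorous at the level of the transversality condition: one must check that $Q_e$, which is characterized as a solution of \eqref{eq:bayon} on the stable manifold of $\mathcal{E}$, genuinely becomes a \emph{leaving}-type intersection point $Q_l$ for the field $F(x,-\dot x)$ to which Theorem \ref{the:TC-meets-gamma-omega} can be applied verbatim — the curve $F_u(x,u)=0$ and the curve $F_u(x,-u)=0$ agree only on $\{u=0\}$, so the argument really needs $Q_e$ near the $u=0$ axis, or else a slightly more careful restatement of Theorem \ref{the:TC-meets-gamma-omega} that only uses the transversality curve at the single point $Q_e$ and its tangent there. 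Everything else is a bookkeeping exercise in continuous dependence on parameters and the local normal form at a hyperbolic saddle, already laid out in Section \ref{sec:saddles}.
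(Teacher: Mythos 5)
Your proposal is correct and follows essentially the same route as the paper, whose proof is a one-line appeal to the qualitative description of Section~\ref{sec:saddles} (which already contains the dual argument via $F(x,-\dot{x})$ for the free-initial-point case), the local structure of hyperbolic saddles, and continuous dependence on parameters. The ``main obstacle'' you flag at the end is not a real one: the time reversal reflects the whole phase plane by $u\mapsto -u$, so the transversality curve of the reversed Lagrangian $\tilde F(x,u)=F(x,-u)$ is exactly the reflection of $\{F_u(x,u)=0\}$ and the reflected $Q_{e}$ lands on it automatically (indeed $\tilde C(x,u)=C(x,-u)$, so the reflected system \eqref{eq:bayon} has precisely the reflected solutions), with no need for $Q_{e}$ to be near the $u=0$ axis.
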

\begin{proof}
  As previously, the proof is a straightforward application of the description in \ref{sec:saddles}, the structure of hyperbolic singularities and the continuous dependence on parameters of solutions of ODEs.
\end{proof}

Finally, consider the problem with fixed endpoints:
\begin{equation}
  \label{eq:P1-fixed-endpoints}
      \overline{\mathcal{P}}\equiv
    \left\{
    \begin{array}{l}
    \displaystyle\min \int_0^T F(x(t), \dot{x}(t))\,dt\\[1em]
    x(0) = x_0,\ x(T) = x_T
    \end{array}
    \right.
\end{equation}
In this case the statements holds regardless of the transversality condition.

\begin{theorem}\label{the:fixed-endpoints}
  Assume $P$ is a hyperbolic singularity of $\mathcal{E}$. Assume $x=x_0$ meets $\gamma_1^s$ at $Q_{e}$ and $x=x_T$ meets $\gamma_1^u$ at $Q_{l}$. Then $P$ is a turnpike for the problem \eqref{eq:P1-fixed-endpoints}, and there is an open neighborhood $V$ of $P$ such that, for $T\rightarrow\infty$ any extremal $\gamma$ of \eqref{eq:P1-fixed-endpoints} included in $V$ satisfies  statements (1)--(4) of Theorem \ref{the:free-points}.
\end{theorem}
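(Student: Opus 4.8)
The plan is to argue exactly as in Section~\ref{sec:saddles}: rephrase $\overline{\mathcal{P}}$ as a question about trajectories of the planar field $\mathcal{E}$ and then produce the required extremal by shadowing the broken orbit obtained by following the stable branch $\gamma_s^1$ from $Q_e$ to $P$ and then the unstable branch $\gamma_u^1$ from $P$ to $Q_l$. Indeed, an extremal of \eqref{eq:P1-fixed-endpoints} is precisely a trajectory $(x(t),u(t))$ of $\mathcal{E}$ with $(x(0),u(0))\in\{x=x_0\}$ and $(x(T),u(T))\in\{x=x_T\}$ --- no transversality condition intervenes here. First I would fix an open set $V\subset W$ that contains $P$ together with the two separatrix arcs $\overline{Q_eP}\subset\gamma_s^1$ and $\overline{PQ_l}\subset\gamma_u^1$, is cut near $P$ by the four separatrix branches into the angular sectors $U_{11},U_{12},U_{21},U_{22}$ as in Section~\ref{sec:saddles}, and contains no singularity of $\mathcal{E}$ other than $P$ (this is what the hypothesis on $V$, read as in Theorem~\ref{the:structure-with-extremal}, provides).

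Next I would set up the entry side. Assuming, generically, that $\{x=x_0\}$ meets $\gamma_s^1$ transversely at $Q_e=(x_0,u_e)$, this line is split near $Q_e$ by $\gamma_s^1$ into one arc falling into the $U_{11}$ sector and one falling into the $U_{12}$ sector. I would take the $U_{11}$ one, giving a one-sided family of initial points $(x_0,u)$ with $u\to u_e$ whose forward trajectories $\varphi_u$ first shadow $\gamma_s^1$ towards $P$ by continuous dependence, then, being off the stable manifold, pass through $V$ inside $U_{11}$ and leave it; by the $\lambda$-lemma for the hyperbolic saddle $P$ (see \cite{Ilyashenko}) the exit arc of $\varphi_u$ is $C^0$-close to the unstable branch $\gamma_u^1$ bounding $U_{11}$. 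Since $\gamma_u^1$ meets $\{x=x_T\}$ at $Q_l$, transversely, every $\varphi_u$ with $u$ close enough to $u_e$ crosses $\{x=x_T\}$ at a first time $T(u)$ at a point near $Q_l$, without being diverted in between because $V$ carries no other singularity of $\mathcal{E}$.

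Then I would match the total time. As $u\to u_e$ the starting point approaches the stable manifold, the sojourn near $P$ blows up and $T(u)\to\infty$; $T(u)$ stays bounded for $u$ bounded away from $u_e$ and is continuous in $u$ by continuous dependence on initial conditions, so for every sufficiently large $T$ there is $u(T)$ with $T(u(T))=T$, and the trajectory of $\mathcal{E}$ through $(x_0,u(T))$ run for time $T$ is an extremal $\gamma_T$ of $\overline{\mathcal{P}}$ contained in $V$. Letting $T\to\infty$ forces $u(T)\to u_e$, hence $(x(0),u(0))\to Q_e$; the central part of $\gamma_T$ stays in $V$ and converges to $P$; the initial arc converges to $\overline{Q_eP}\subset\gamma_s^1$ and the terminal arc to $\overline{PQ_l}\subset\gamma_u^1$. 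These are exactly conclusions (1)--(4) of Theorem~\ref{the:free-points}, and genuine optimality --- so that $P$ is a turnpike --- follows as in the references invoked in Section~\ref{sec:saddles}. The same local saddle structure yields the ``any extremal included in $V$'' clause: a trajectory confined to $V$ with $x(0)=x_0$, $x(T)=x_T$ and $T$ large must spend a long time near $P$, hence must enter $V$ along the stable and leave along the unstable manifold, and the endpoint constraints together with the transversal intersections pin the entry branch to $\gamma_s^1$ through $Q_e$ and the exit branch to $\gamma_u^1$ through $Q_l$, so it too satisfies (1)--(4).

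The main obstacle will be the global rather than local part of the picture: controlling $\varphi_u$ between the entry neighbourhood of $Q_e$ and $V$, and between leaving $V$ and reaching $\{x=x_T\}$ near $Q_l$. The behaviour near $P$ is handled by the $\lambda$-lemma and the closeness to $\gamma_s^1$, $\gamma_u^1$ away from $P$ is pure continuous dependence; what one genuinely needs --- and what is built into the statement through the choice of $V$ and the standing hypotheses --- is that the broken orbit $Q_e\to P\to Q_l$ carries no further singularity of $\mathcal{E}$ and that $\{x=x_0\}$, $\{x=x_T\}$ meet $\gamma_s^1$, $\gamma_u^1$ transversely at $Q_e$, $Q_l$. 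Under these genericity conditions the construction is the standard shadowing of an orbit broken at the saddle $P$ and there is nothing further to do; consistently with the rest of the paper, I would replace a formal write-up by a reference to Section~\ref{sec:saddles}.
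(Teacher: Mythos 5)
Your proposal is correct and follows essentially the same route as the paper, which gives no separate proof of this theorem beyond the blanket appeal to the qualitative description in Section~\ref{sec:saddles}, the local structure of hyperbolic saddles, and continuous dependence on parameters. Your write-up (shadowing the broken orbit $Q_e\to P\to Q_l$, the $\lambda$-lemma at the saddle, and the intermediate-value argument matching the total time $T$) is simply a faithful elaboration of that intended argument, including the implicit genericity/transversality hypotheses the paper leaves unstated.
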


Notice that $Q_{e}$ and $Q_{l}$ in previous results can be easily computed using
the fact that $\gamma_i^{s}$ and $\gamma_i^u$ are level sets of $C(x,u)$. Thus, if they
exist, then
\begin{equation*}
  Q_{e} \in \left\{ C(x,u)=C(P) \land x=x_0 \right\},\,\,\,
  Q_{l} \in \left\{ C(x,u)=C(P) \land x=x_T \right\}
\end{equation*}
However, those solution sets might have more than one point, and one needs to verify which (if any) can be a candidate.

{}
\subsection{Suggestion for an approximate algorithm}
From the discussions above, the following method is suggested to use the turnpike and the entry and leaving arcs as approximate extremals for $T\gg 0$. Specifically, for Problem \eqref{eq:P1} with $x(0)$ fixed and $x(T)$ free:
\begin{enumerate}[i)]
\item State a tolerance $\epsilon>0$.
\item Find the possible turnpike $P=(x_P,0)$. This requires studying the phase space of \eqref{eq:euler-campo}, its singularities and the separatrices $\gamma_s^i$ and $\gamma_u^i$, for $i=1,2$. For this one can just use the level set $C(x,u)=C(x_P,0)$.
\item Find the adequate $Q_e$ and $Q_l$. As explained above, $Q_e$ belongs to $x=x_0$ and $C(x,u)=C(x_P,0)$, whereas $Q_l$ is found using system \eqref{eq:bayon}.
\item From $Q_e$ compute the trajectory $\gamma_{e}(t)$ of $\mathcal{E}$ with $\gamma_e(0)=Q_e$ and ending at $|\gamma_e(T_e)-x_P|<\epsilon$. This is an IVP integrated until some condition is met.
\item From $Q_l$ compute the trajectory $\gamma_l(t)$ of $\mathcal{E}$ with $\gamma_l(T_{l})=Q_l$ and $|\gamma_l(0)-x_P|<\epsilon$. This is a backwards IVP integrated until some condition is met.
\end{enumerate}
After those computations, if $T>T_e+T_l$, then any extremal $\gamma_T$ of \eqref{eq:P1} can be approximated by the turnpike as:
\begin{equation}
  \label{eq:approximation}
  \gamma_T(t) \simeq \left\{
    \begin{array}{l}
      \gamma_e(t)\  \mathrm{if}\ t\in[0,T_{e})\\
      x_{P}\ \mathrm{if}\ t\in[T_e,T-T_l]\\
      \gamma_l(t)\ \mathrm{if}\ t\in(T-T_l,T]
    \end{array}
  \right.
\end{equation}
{}

\section{An example: shallow lakes}\label{sec:example}
In this section we showcase the well-known shallow lakes model without discount (see for instance \cite{wagener 2003} for the details), with a modified cost function to prevent, in our example, the issues with the logarithm. The problem to be solved is, initially, the Optimal Control problem with control variable $v$:
\begin{equation}
  \label{eq:fisheries}
    \mathcal{P}\equiv
    \left\{
    \begin{array}{l}
      \displaystyle\max \int_0^T \left(v^{2} - c x^{2}\right)\,dt\\[0.5em]
      \dot{x} = v - b x + r \displaystyle\frac{x^{2}}{x^2+1}\\
    x(0) = x_0
    \end{array}
    \right.
  \end{equation}
  with $c, b, r$ positive constants. This is, in fact, a variational problem, as $v$ can be expressed as a function of $x, \dot{x}$ and there are no restrictions. Thus, we shall in fact study the variational problem
  \begin{equation}
    \label{eq:fisheries-var}
    \mathcal{P}\equiv
    \left\{
    \begin{array}{l}
      \displaystyle\max \int_0^T
      F(x, \dot{x})
      \,dt\\[0.8em]
    x(0) = x_0
    \end{array}
    \right.    
  \end{equation}
  with
  \begin{equation*}
    F(x,\dot{x}) =
b^2 x^2-\frac{2 b r x^3}{x^2+1}+2 b x \dot{x}-c x^2+\frac{r^2 x^4}{\left(x^2+1\right)^2}-\frac{2 r x^2 \dot{x}}{x^2+1}+\dot{x}^2
  \end{equation*}
  and we shall set the value of the constants to $r=1, c=0.1$, and $b=0.7$. The Euler equation for this problem is, once divided by $\dot{x}$:
  \begin{multline}
    \label{eq:fisheries-euler}
    \frac{1}{(1+x^2)^3}\bigg(
      x^6 (-2 \ddot{x}-1.4)+x^4 (-6 \ddot{x}-5.6)\bigg.+
    x^2 (-6 \ddot{x}-4.2)-\\
    \bigg.(2 \ddot{x}+0.78 x^7+2.34 x^5+6.34 x^3+0.78 x
    \bigg) = 0
  \end{multline}
  And the vector field associated to this second order equation is, in the $(x,u)$ plane corresponding to $(x,\dot{x})$:
  \begin{equation}
    \label{eq:fisheries-vector}
    \mathcal{E} \equiv
    \left\{
      \begin{array}{l}
        \dot{x} = u\\
        \dot{u} = \displaystyle
        \frac{0.39 x \left(x^6-1.79487 x^5+3. x^4-7.17949 x^3+8.12821 x^2-5.38462 x+1\right)}{\left(x^2+1\right)^3}
      \end{array}
    \right.
  \end{equation}
  The denominator in $\dot{u}$ is never $0$, so that $\mathcal{E}$ is well-defined in all $\mathbb{R}^{2}$. The vector field $\mathcal{E}$ has three singular points: two hyperbolic saddles $P_1=(0,0)$ and $P_2\simeq(1.5062,0)$ and a center/focus, $O=(0.2747,0)$. Figure \ref{fig:fisheries-vector} shows the structure of $\mathcal{E}$ near its singularities (in red).

  The function whose level sets are the extremals (the trajectories of $\mathcal{E}$) is
  \begin{equation*}
    C(x,u) = x^2 \left(\frac{x^2}{\left(x^2+1\right)^2}-\frac{1.4 x}{x^2+1}+0.39\right)-u^2
  \end{equation*}
  so that we need to focus our attention on the level sets
  \begin{equation*}
    L_1 \equiv C(x,u) = C(P_1) = 0
  \end{equation*}
  and
  \begin{equation*}
    L_2 \equiv C(x,u) =C(P_2) = -0.097
  \end{equation*}
  Finally, the transversality condition in this case is given by
  \begin{equation*}
    Tr \equiv 2 u + 1.4 x - \frac{2 x^2}{1 + x^2}=0.
\end{equation*}
In Figure \ref{fig:fisheries} we have plotted the sets $L_1$ (cyan), $L_2$ (yellow) and $Tr$ (black). Notice how $Tr \cap L_1$ is just the hyperbolic point $P_1$ whereas $Tr\cap L_2$ has two points, one above $u=0$ and the other one below (both in green).

  \begin{figure}[h!]
    \centering
    \includegraphics[width=8cm]{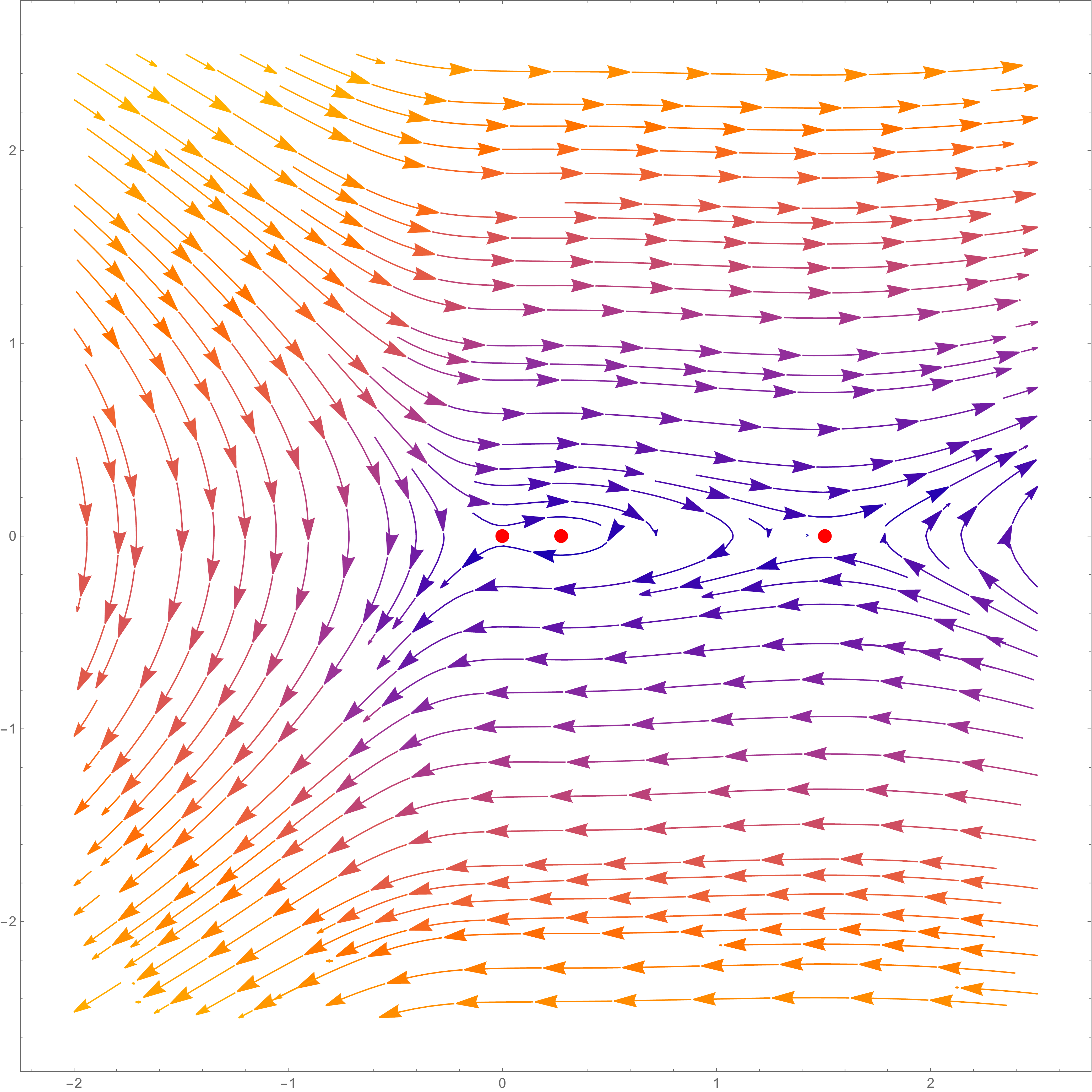}
    \caption{Stream lines of $\mathcal{E}$ in the example. The red dots are its singulaties, at $u=0$, $x\in \left\{ 0,0.2747,1.5062 \right\}$.}
    \label{fig:fisheries-vector}
  \end{figure}
  
  Surprisingly enough, the transversality condition (in black in Figure \ref{fig:fisheries}) only meets the curve $M_{1}\equiv C(x,u)=C(P_1)$ (in cyan) at $(0,0)$ so that our results only apply to $P_1$ in the fixed-endpoints versions {}(because $Tr$ never meets $M_1$ transversely){}.


  Consider the hyperbolic saddle $P_2$. We are going to showcase the four turnpike possibilities for it under problem (\ref{eq:fisheries-var}).

The transversality condition meets the (yellow) curve $M_{2}=C(x,u)=C(P_2)$ at the (green) points $Q_1\simeq(-0.9852,1.1822)$ and $Q_2\simeq(0.9852,-1.971)$. Clearly, the top left and bottom right parts of $M$ are the stable manifolds, call them $\gamma_s^1$ and $\gamma_s^2$, respectively, whereas $\gamma_u^1$ is the bottom-left part. 

\begin{figure}[h!]
  \centering
  \begin{tikzpicture}
    \node (fig) at(0,0) {\includegraphics[width=8cm]{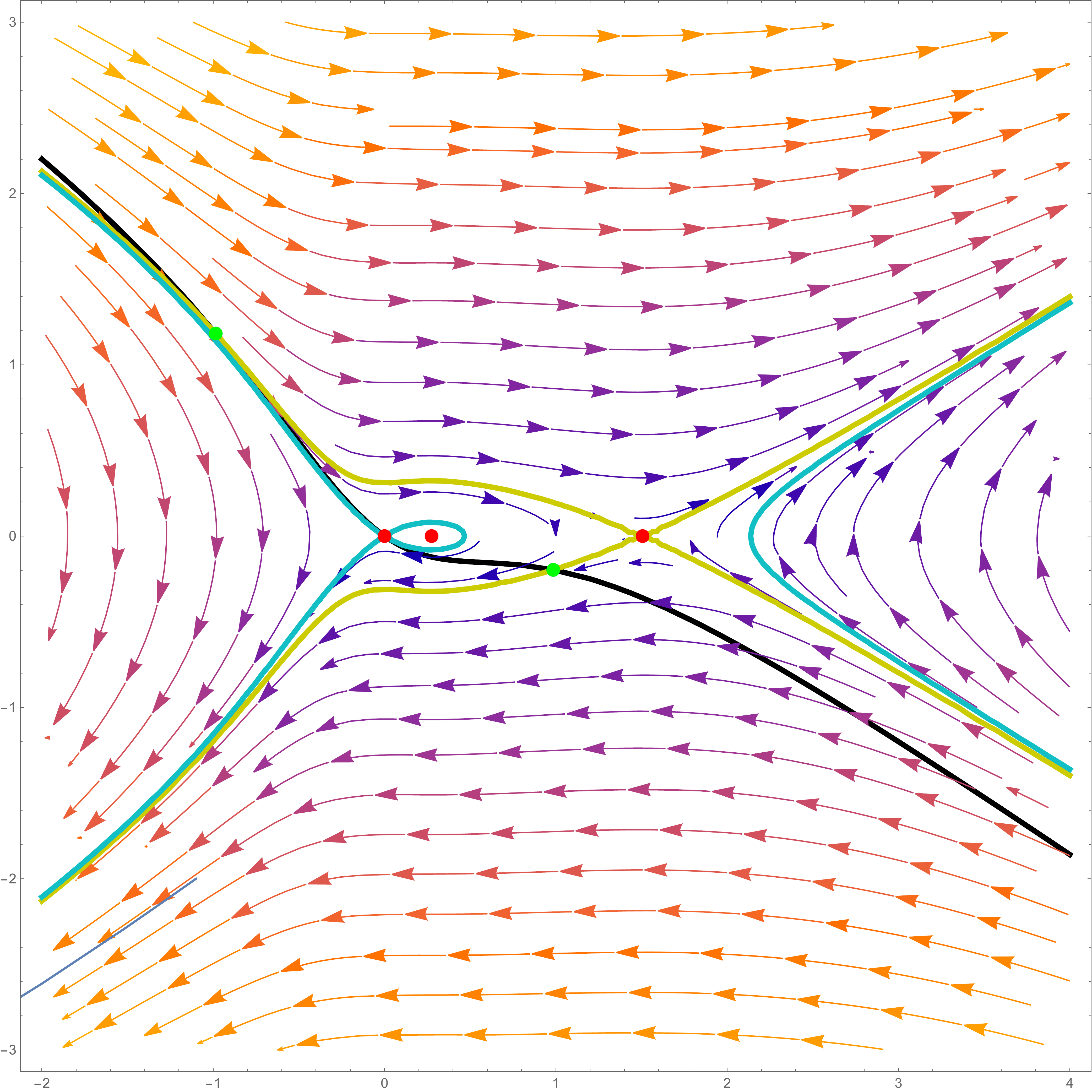}};
    \draw[color=black](0.15,-0.45) node{$Q_2$};
    \draw[color=black](-2.1,1.7) node{$Q_1$};
\end{tikzpicture}
\caption{Hyperbolic structure of the example. The leftmost singularity is hyperbolic but its level set (blue) meets the transversality condition only at the singularity. The level set of the rightmost singularity (yellow) meets the transversality condition twice (at the green dots).}
  \label{fig:fisheries}
\end{figure}

\subsection{Initial condition fixed. Change of entry arc and of turnpike}
Recall that we have called: $\gamma_s^1$ the top-left branch of $M_2$ (in yellow in Figure
\ref{fig:fisheries}) and $\gamma_s^2$ the bottom-right branch (these are the stable
trajectories and will give rise to the entry arcs). Also, $\gamma_u^1$ is the bottom-left
branch, and $\gamma_u^{2}$ the top-right one, which will give rise to the leaving arcs.

In this subsection we are going to study the problem (\ref{eq:fisheries}) (i.e. with
initial condition but no end condition).

If $x_0>1.5062$, the extremals meet the transversality condition near
$Q_2\simeq(0.9852,-1.971)$ for $T\rightarrow \infty$, whatever the value of $x_0$. The
entry arc to the turnpike $P_{2}$ in this case is $\gamma_2^s$ from $x=x_0$ to $P_{2}$:
this happens for any $x_0>1.5062$ because the transversality condition does not meet $M_2$
for $x>1.5026$.

However, the moment $x_0$ is to the left of $P_2$, that is $x_0<1.5062$, the entry arc to turnpike $P$ changes from $\gamma_s^2$ to $\gamma_s^1$ (which is above $u=0$). As $x_{0}\rightarrow -0.9852$ (the $x$-coordinate of $Q_1$), the extremals approach $\gamma_s^1$. The problem with $x_0=-0.9852$ has no solution because $Q_1$ is the only intersection point between an extremal which meets the transversality condition (this is easily seen in the Figure \ref{fig:fisheries}).

Finally, for $x_0<-0.9852$, the candidate extremals for problem $\mathcal{P}_f$ for $T\rightarrow\infty$ approach $P_1=(0,0)$, the intersection point of $Tr$ and $M_1$ (the black and cyan lines in Figure \ref{fig:fisheries}). Thus, there is an entry arc, from $x=x_0\cap Tr$ to $P_1$ but the turnpike is never left, in this case.

\subsection{Initial and final conditions fixed}

When $x(0)=x_0$ and $x(T)=x_T$ are both fixed, the transversality condition plays no role and one needs only study the relation between these conditions and the hyperbolic singularities $P_1$ and $P_2$. For the sake of simplicity, we are only going to show some cases. Let $x_{P_1}=0$ denote the $x$-coordinate of $P_1$ and $x_{P_2}\simeq 1.5062$ the one of $P_2$. Of course, in order to have a turnpike behavior, there must be at least one singularity between $x_0$ and $x_T$.

\begin{itemize}
\item When $x_0>x_{P_2}>x_T$, then $P_2$ is a turnpike and the entry arc is $\gamma_s^2\cap \{x=x_{0}\}$, and the leaving arc is $\gamma_u^1\cap \{x=x_T\}$.
\item On the other hand, if $x_0<x_{P_2}<x_T$ then the situation reverses at $P_{2}$ (we are ``above $u=0$'' and the arcs are now: $\gamma_s^1$ the entry one from $x=x_0$ to $P_2$ and $\gamma_u^2$ from $P_2$ to $x=x_T$.
\item If $x_0,x_T\in (x_{P_1},x_{P_2})$ (that is, both endpoints are between $P_1$ and $P_{2}$) it is easy to realize that $P_1$ is still a turnpike and the entry and leaving paths correspond to $\gamma_s^1$ and $\gamma_u^2$, respectively (starting at $x=x_0$ and ending at $x=x_T$, also).
\item When, say $x_{0}<x_{P_1}$ and $x_T<x_{P_2}$, there are two candidate extremal curves for $T\rightarrow\infty$: one having a turnpike at $P_1$, and the other one at $P_2$; it is necessary here to discern the optimality by other methods (which we shall not do, as this is out of our aim). Obviously, each turnpike has his respective entry and leaving arcs (in this case, $P_1$ arcs are the cyan unbounded curves to its left).
\end{itemize}

\subsection{The free endpoint problem.}
Finally, the free endpoint problem requires the extremals to meet the transversality condition at $x(0)$ and $x(T)$. In our case, only $M_2$ meets $Tr$ twice away from a singularity, whereas $M_1\cap Tr=\{ P_{1} \}$. As far as extremals go, the ``constant curve'' $(x(t),u(t))=P_1$ for all $t\in[0,T]$ is always a candidate trajectory (as it is an extremal which satisfies the transversality conditions). These have obviously constant cost $F(P_1)\times T$.

There is, however, a second possibility giving rise to a true turnpike: the solutions starting near $Q_1$ below $\gamma^1_s$, approaching $P_2$ and ending near $Q_2$ above $\gamma_u^1$. In this case, the entry arc is $\gamma_s^1$ from $Q_1$ to $P$ and the leaving arc is $\gamma_u^1$ from $P_1$ to $Q_2$.

\section{Simulations}\label{sec:simulations}
In this section we plot the simulations corresponding to some of the cases in Section \ref{sec:example}. We have used a budget computer (Intel Core i5 with 16Gb RAM) and Mathematica, with no excessive time used (the simulations can be run in several hours, the longest time taken by the very precise computation of the entry and leaving arcs and, unfortunately, the plotting commands, as the numerical solutions are interpolating functions and their evaluation is quite slow). We restrict ourselves, for the sake of brevity, to the initial problem (\ref{eq:P1}) with $x(0)=x_0=0.5$ and $T\gg 0$.

The above requires computing the turnpike entry arc starting at the point $P_{e}=(0.5,u_e)$, which is the solution of the first equation in \eqref{eq:bayon} with $x=0.5$, that is, $u_e$ is the solution of:
\begin{equation}
  \label{eq:x=0.5-bayon}
  F(0.5, u) = C(P_2) = C(1.5062,0),
\end{equation}
giving $u_e\simeq 0.30751221580$. However, \emph{one needs to compute $u_e$ with a huge precision in order to really obtain a fine approximation to the turnpike.} In our computations, we used $30$ values of precision when computing the solution of \eqref{eq:x=0.5-bayon} (so that $P_2$ was also computed with that precision).

We also need to compute the leaving arc of that turnpike, which requires knowing the point $P_l=Q_2$, solution of (\ref{eq:bayon}):
\begin{equation}
  \label{eq:leaving-bayon}
  \left\{
    \begin{array}{l}
      F(x_{l}, u_{l}) = C(P_2)\\
      Tr(x_l,u_l) = 0
    \end{array}
  \right.
\end{equation}
which gives, as indicated above, $P_l\simeq (0.9852, -1.971)$ (with the same caveat regarding the precision).

Figure \ref{fig:time-63} contains the plot of the extremal $x(t)$ (in blue) and its derivative $u(t)=\dot{x}(t)$ (orange), corresponding to (\ref{eq:P1}) with $x_0=0.5$ and $T=63$. Overlain (in dashed liens) we have plotted the entry arc from $t=0$ to $t=24$, and the leaving arc, from $t=41$ to $t=63$. There is no noticeable difference.

\begin{figure}[h!]
  \centering
  \includegraphics[width=8cm]{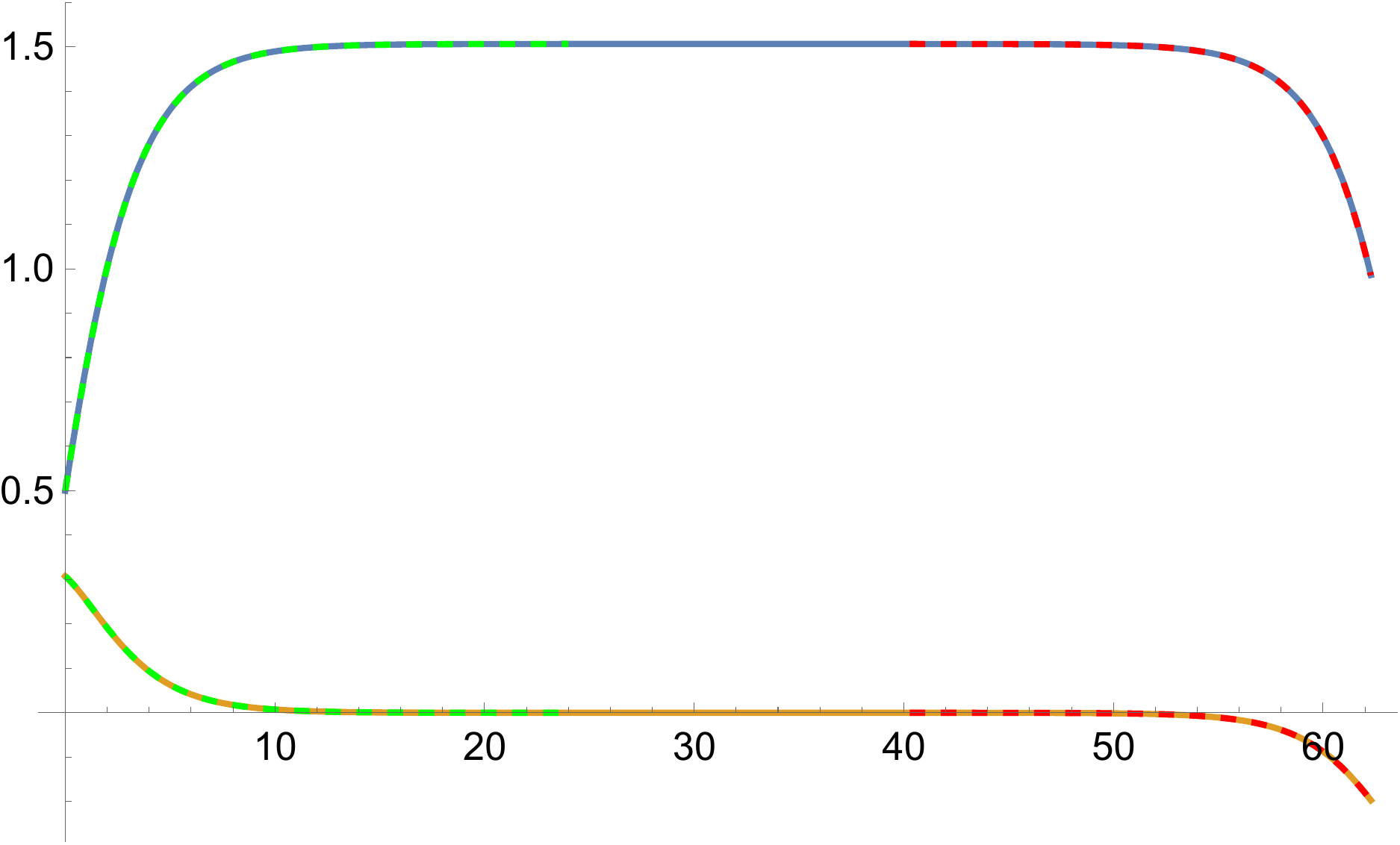}
  \caption{Turnpike entry and leaving arcs compared to solution for $T= 63$.}
  \label{fig:time-63}
\end{figure}

Figure \ref{fig:time-63-diff} shows, on the left, the difference between $x(t)$ and the entry arc for the same $T=63$, and on the right, the difference between $u(t)=\dot{x}(t)$ and the corresponding value on the turnpike, for $t=T-22$ to $T$ (where $22$ is taken as a value where the $x-value$ of the leaving arc is less than $10^{-5}$ from the true turnpike $P_2$). Notice that the time is inverted in the latter plot because we have computed the leaving arc ``backwards''. The errors are, as can be seen, irrelevant to all purposes.

\begin{figure}[h!]
  \centering
  \includegraphics[width=6.2cm]{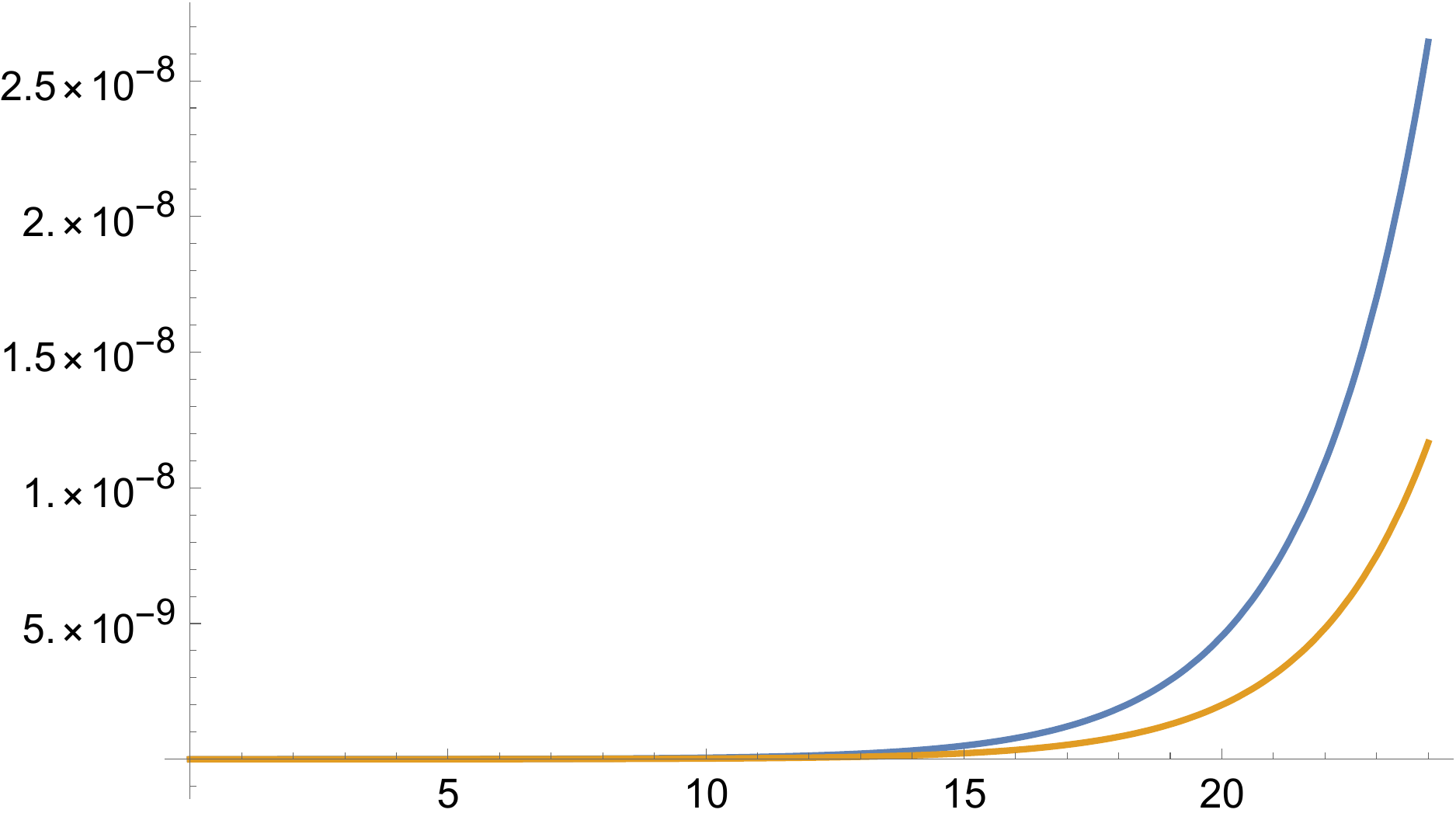}\ \ 
  \includegraphics[width=6.2cm]{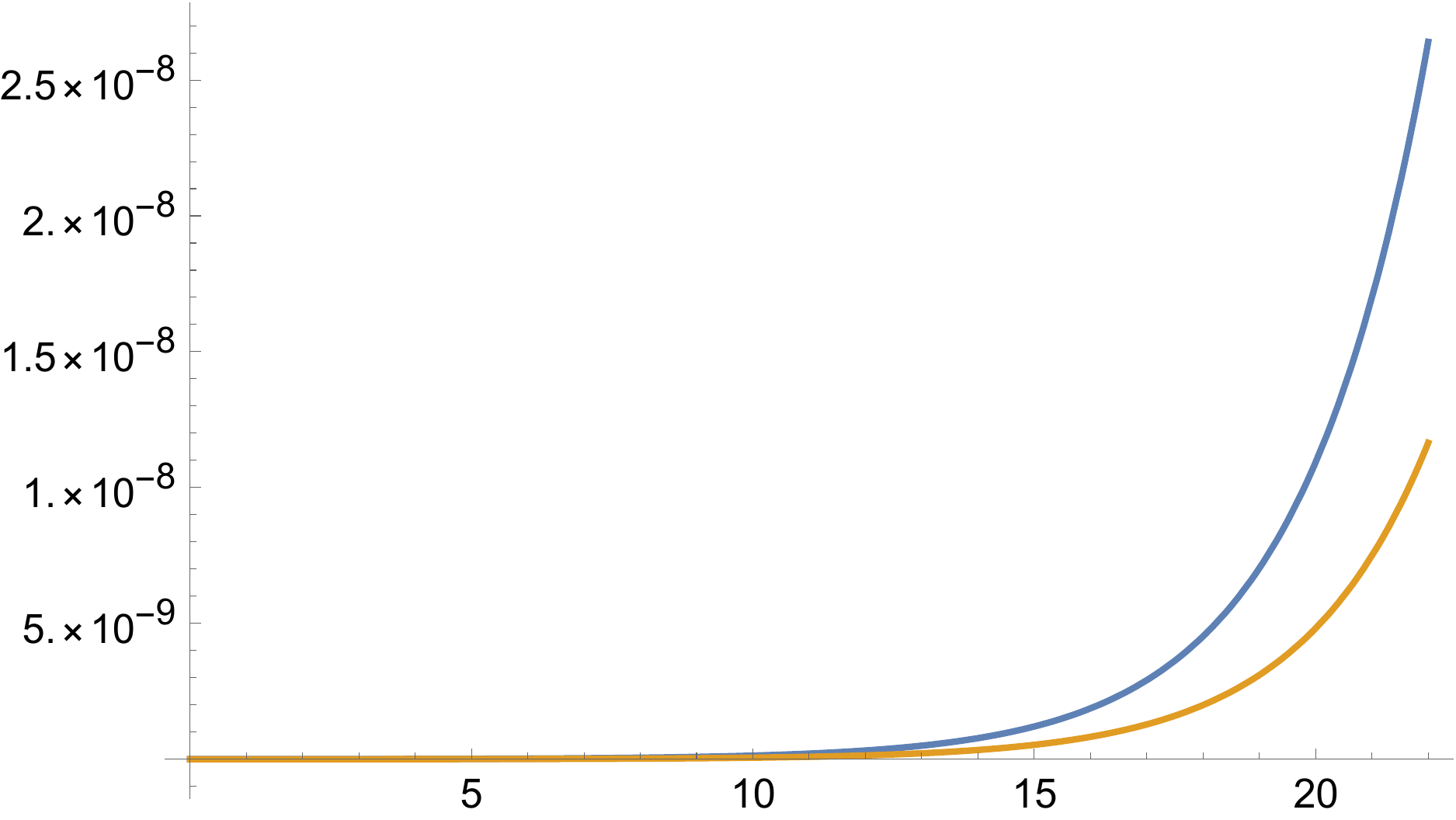}
  \caption{Absolute differences between entry (left) and leaving (right) arcs and the corresponding part of the solution for $T=63$. On the right, the time is reversed (from $Q_l$ to $P$).}
  \label{fig:time-63-diff}
\end{figure}

Finally, Figure \ref{fig:20-times} contains the plots of the different solutions $x(t)$ for times $T$ between $51$ and $56$ and for time $T=63$. The structure of the entry arc is essentially the same for all and all are, obviously, indistinguishable, whereas the leaving arc is also essentially equal but starts at different times. Figure \ref{fig:20-times-diffs} shows the difference between the corresponding entry and leaving arcs and the ones of the turnpike (where the cutting point is set as above).

\begin{figure}[h!]
  \centering
  \includegraphics[width=8cm]{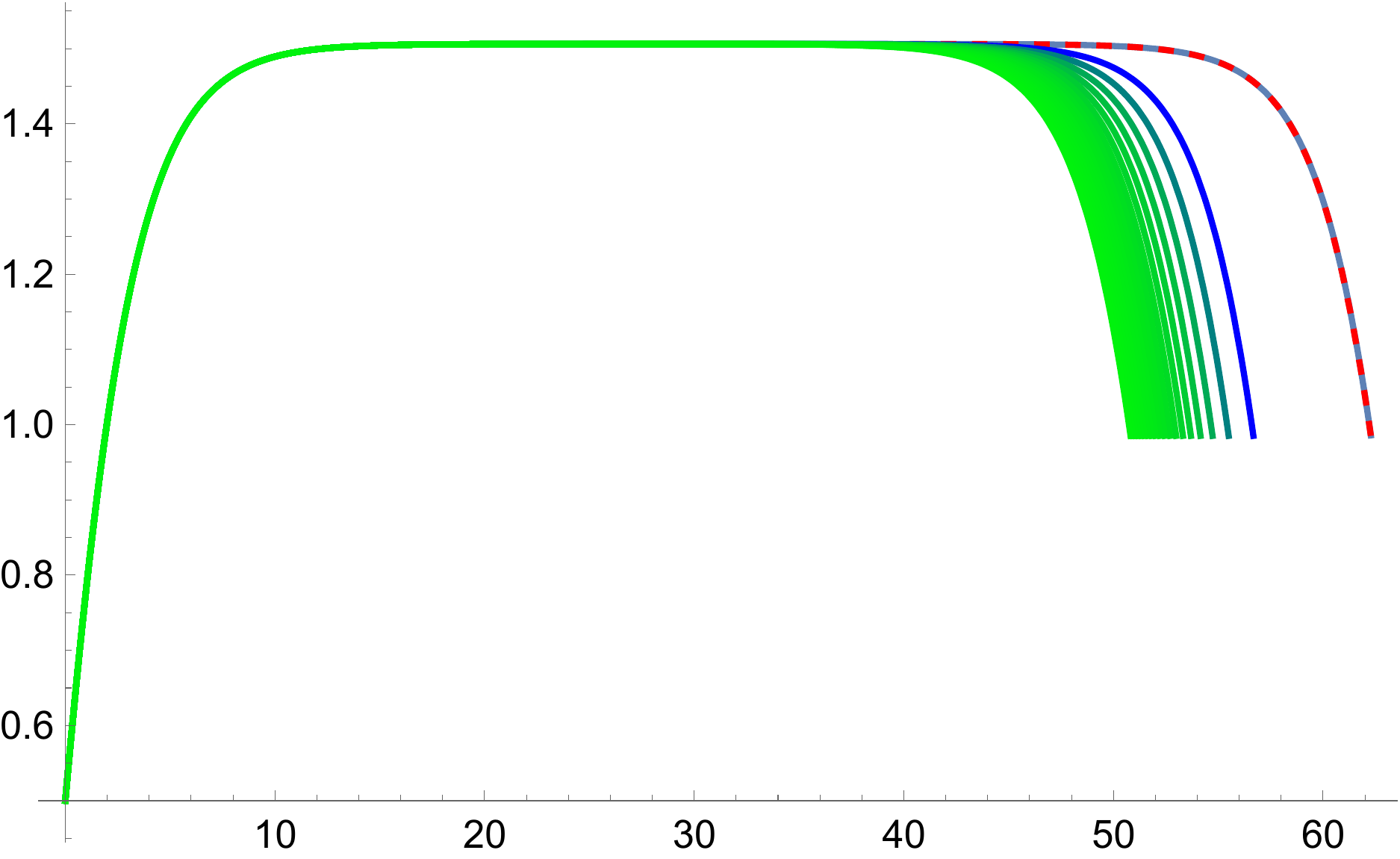}
  \caption{Solutions for times between 51 and 56, and for $T=63$.}
  \label{fig:20-times}
\end{figure}

\begin{figure}[h!]
  \centering
  \includegraphics[width=6.2cm]{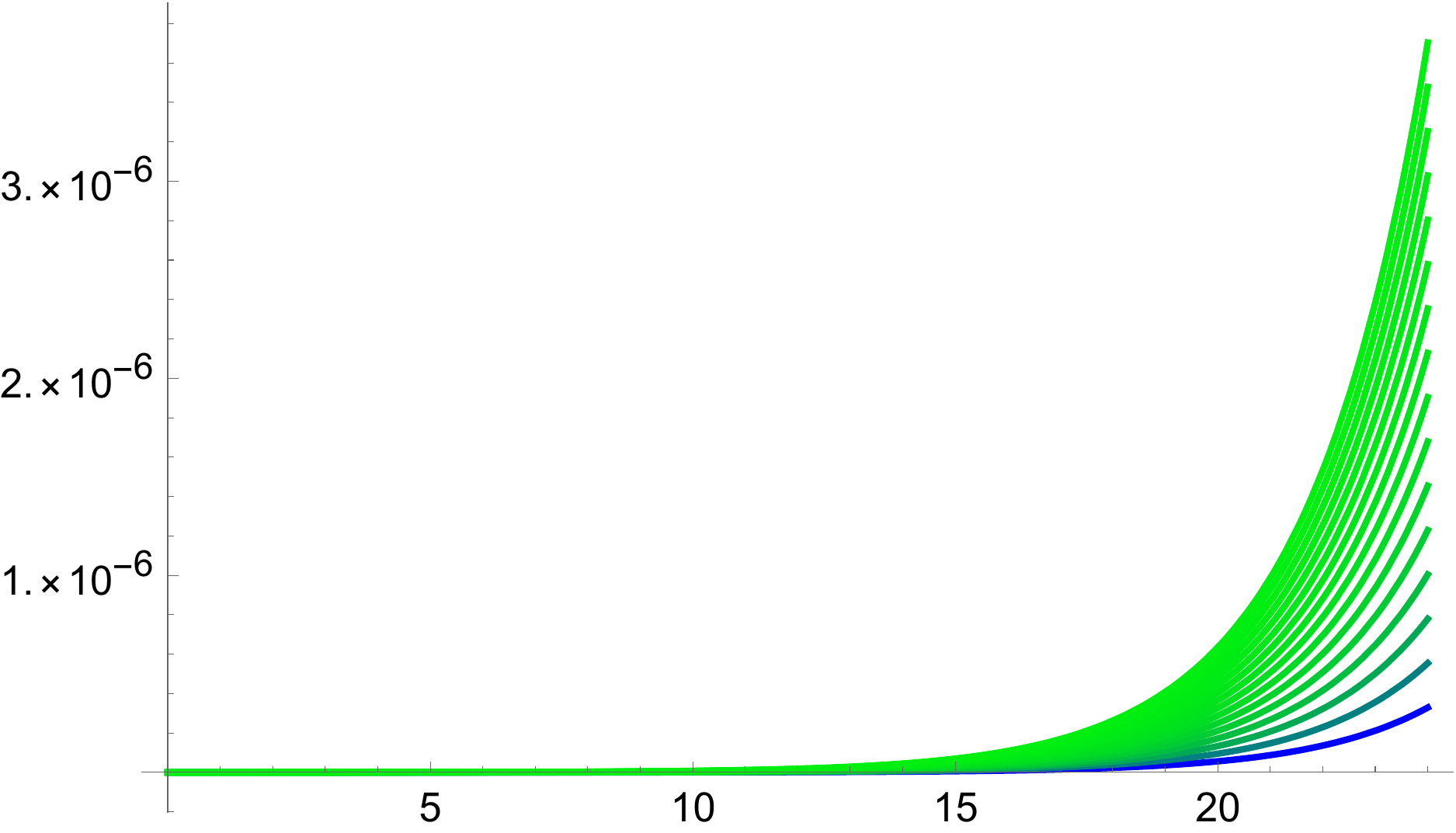}\ \ 
  \includegraphics[width=6.2cm]{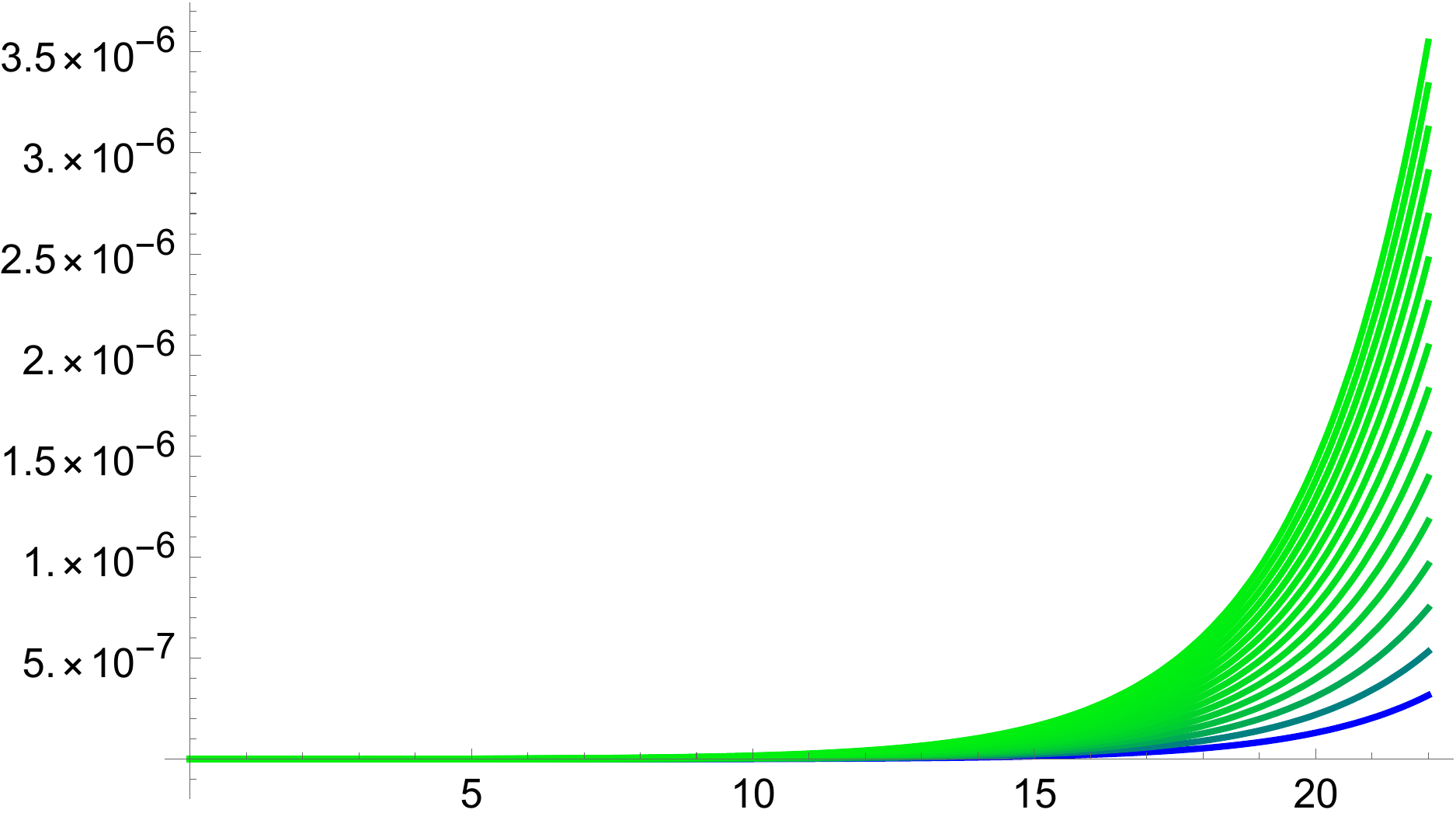}
  \caption{Absolute difference between the solutions in Figure \ref{fig:20-times} and the entry (left) and leaving (right) arcs (time is reversed on the right).}
  \label{fig:20-times-diffs}
\end{figure}

\section{Final remarks}
Our aim in this paper is just to show, in the case of dimension $1$, which is the most graphical one, how to compute the entry and leaving arcs of the turnpike of an autonomous variational problem in order to settle this question. Of course, the generalization to variational problems in which the functional $F(x_1,\dot{x}_{1},\ldots, x_k,\dot{x}_k)$ has ``separated variables'', that is problems with
\begin{equation*}
  \frac{\partial^2 F}{\partial u \partial v} = 0
\end{equation*}
whenever $u, v$ correspond to variables with different indices (i.e. $u\in \left\{ x_i,\dot{x}_{i} \right\}$ and $v\in \left\{ x_j,\dot{x}_j \right\}$ with $i\neq j$) is straightforward, as the associated vector fields are defined by independent equations.

The most general autonomous case is, for the time being, inaccessible to us but we hope the technique presented in this work may be useful to elucidate their solution.

\end{document}